 \theoremstyle{plain}
 \newtheorem{tm}{Theorem}[section]
 \newtheorem{lm}[tm]{Lemma}
 \newtheorem{pro}[tm]{Proposition}
 \theoremstyle{definition}
 \newtheorem{defi}[tm]{Definition}
 \newtheorem{rema}[tm]{Remark}
  \newtheorem{ex}[tm]{Example}
  \newtheorem{pdr}[tm]{Procedure}
 \newtheorem*{th*}{Theorem}
\newcommand{\cl}[1]{\mathcal{#1}}
\newcommand{\Z}{\mathbb Z}
\newcommand{\N}{\mathbb N}
\newcommand{\R}{\mathbb R}
\newcommand{\F}{\mathbb F}
\newcommand{\Pp}{\mathbb P}
\newcommand{\cA}{\cl A}
\newcommand{\cC}{\cl C}
\newcommand{\K}{{\mathbb K}}
\newcommand{\la}{\langle}
\newcommand{\ra}{\rangle}
\newcommand{\m}{\mathfra\K{m}}
\newcommand{\reg}{\operatorname{reg}}
\newcommand{\Cl}{\operatorname{Cl}}
    \newcommand{\rank}{\operatorname{rank}}
\newcommand{\dis}{\displaystyle}
\def\aa{{\bf \alpha}}
\def\bb{\beta}
\def\t{{\bf t}}
\def\q{{\bf q}}
\def\y{{\bf y}}
\def\uu{{\mathbf{u}}}
\def\vv{{\bf v}}
\def\x{{\bf x}}
\def\q{{\bf q}}
\def\m{{\bf m}}
\def\kk{{\mathbf k}}
\def\hh{{\mathbf h}}
\def\ev{{\text{ev}}}
\begin{document}


\title{On parameterized toric codes}
 \thanks{The first author is supported by T\"{U}B\.{I}TAK-2211, the second author is supported by T\"{U}B\.{I}TAK Project No:114F094}

\author[Esma Baran]{Esma Baran}
\address[Esma Baran]{Department of Mathematics, \c{C}ank{\i}r{\i} Karatekin University, \c{C}ank{\i}r{\i}, TURKEY}
\email{esmabaran@karatekin.edu.tr}
\author[Mesut \c{S}ah\.{i}n]{Mesut \c{S}ah\.{i}n}
\address[Mesut \c{S}ah\.{i}n]{Department of Mathematics, Hacettepe  University, Ankara, TURKEY}
\email{mesut.sahin@hacettepe.edu.tr}
\keywords{evaluation code, toric variety, multigraded Hilbert function, vanishing ideal, parameterized code, lattice ideal}
\subjclass[2010]{Primary 14M25, 14G50; Secondary 52B20}


\begin{abstract} Let $X$ be a complete simplicial toric variety over a finite field with a split torus $T_X$. For any matrix $Q$, we are interested in the subgroup $Y_Q$ of $T_X$ parameterized by the columns of $Q$. We give an algorithm for obtaining a basis for the unique lattice $L$ whose lattice ideal $I_L$ is $I(Y_Q)$. We also give two direct algorithmic methods to compute the order of $Y_Q$, which is the length of the corresponding code ${\cC}_{\aa,Y_Q}$. We share procedures implementing them in \verb|Macaulay2|. Finally, we give a lower bound for the minimum distance of ${\cC}_{\aa,Y_Q}$, taking advantage of the parametric description of the subgroup $Y_Q$. As an application, we compute the main parameters of the toric codes on Hirzebruch surfaces $\cl H_{\ell}$ generalizing the corresponding result given by Hansen. 
\end{abstract}

\maketitle

\section{Introduction} Let $X$ be a complete simplicial toric variety over a finite field $\K=\F_q$ with a split torus $T_X\cong (\K^*)^{n}$. Our main goal in the present paper is to uncover some algebraic and geometric properties of subgroups $Y_Q=\{[{\t}^{\q_{1}}:\cdots:{\t}^{\q_{r}}]|\t\in (\K^{*})^s\}$
of the algebraic group $T_X$, and develop techniques applying to certain algebraic-geometric codes, for any matrix $Q=[\q_1 \q_2\cdots \q_r]\in M_{s\times r}(\Z)$. It is known that all subgroups of $T_X$ are of this form by \cite[Theorem 3.2 and Corollary 3.7]{Sahin}. 

Denote by $S=\K[x_1,\dots,x_r]$ the homogeneous coordinate ring of $X$, which is $\Z^d$-graded. If $S_{\aa}$ is the finite dimensional vector space spanned by the monomials in $S$ having degree $\aa$, then evaluating polynomial functions from $S_{\aa}$ at the points $[P_1],\dots,[P_N]$ of $Y_Q$ defines the following $\K$-linear map $$\ev_{Y_Q}:S_\aa\to \K^N,\quad F\mapsto (F(P_1),\dots,F(P_N)).$$  
The image $\text{ev}_{Y_Q}(S_\aa) \subseteq \F_q^N$ is a linear code which is denoted by ${\cC}_{\aa,Y_Q}$ and is called the \textbf{parameterized toric code} associated to $Q$. There are $3$ main parameters $[N,K,\delta]$ of a linear code. The \textit{length} $N$ of ${\cC}_{\aa,Y_Q}$ is the order $|Y_Q|$ of the subgroup in our case. The \textit{dimension} of ${\cC}_{\aa,Y_Q}$, denoted $K=\dim_{\K}({\cC}_{\aa,Y_Q})$, is the dimension as a subspace of $\F_q^N$. The number of non-zero entries in any $c\in{{\cC}_{\aa,Y_Q}}$ is called its \textit{weight} and  \textit{minimum distance} $\delta$ of ${\cC}_{\aa,Y_Q}$ is the smallest weight among all code words $c\in{{\cC}_{\aa,Y_Q}}\setminus\{0\}$. 

Parameterized toric codes includes toric codes, constructed by Hansen in \cite{Ha0}, as a special case where $Q$ is the identity matrix $I_r$ and $Y_Q$ is the full torus $T_X$. Toric codes are among evaluation codes on a toric variety showcasing champion examples, see \cite{BRorderbound,BK7codes,Little13}. The length in this special case is $|T_X|=(q-1)^n$. When the evaluation map is injective, the dimension is the number of monomials of degree $\alpha$. Computing the minimum distance is a very challenging task which have been completed in some special cases, in contrast to more general situations where some lower bounds and/or upper bounds on the minimum distance have been given via different methods, see \cite{Ha1,Jo,Little17,LiSche,LiSchw,Ru,SoSo1}. 

There is an algebraic approach for studying these codes relying on the vanishing ideal $I(Y_Q)$ of $Y_Q$ which is the graded ideal generated by homogeneous polynomials in $S$ vanishing at every point of $Y_Q$. Since the kernel of the linear map $\ev_{Y_Q}$ equals the homogeneous piece $I(Y_Q)_\aa$ of degree $\aa$, we have an isomorphism of $\K$-vector spaces $S_\aa/I(Y_Q)_\aa \cong{\cC}_{\aa,Y_Q}$. Thus, the dimension of ${\cC}_{\aa,Y_Q}$ is the multigraded Hilbert function $H_{Y_Q}(\aa):=\dim_{\F_q} S_{\aa}-\dim_{\F_q} I(Y_Q)_\aa$ of $I(Y_Q)$. Initially, there are infinitely many codes corresponding to elements in the semigroup $\N\beta:=\N\bb_1+\cdots+\N\bb_r$, where $\bb_i=\deg (x_i)$ for $i=1,\dots,r$. Since these codes are subspaces of the space $\F_q^N$, the upper bound for the dimension $H_{Y_Q}(\aa)$ of ${\cC}_{\aa,Y_Q}$ is exactly $N=|Y_Q|$. By Singleton's bound $\delta\leq N+1-K$, the minimum distance attains its minimum value $1$ when the dimension $K$ reaches its upper bound $N$.  An important algebraic invariant of $Y_Q$ in detecting these trivial codes is the so-called \textit{multigraded regularity} defined by $$\reg(Y_Q):=\{\aa\in \N\bb \quad:\quad H_{Y_Q}(\aa)= |Y_Q|\}\subseteq \N^d.$$ So, non-trivial codes come from the set $\N\bb \setminus \reg(Y_Q)$. There are also equivalent codes having the same parameters which can be detected using the values of the Hilbert function. More precisely, the codes ${\cC}_{\aa,Y_Q}$ and ${\cC}_{\aa',Y_Q}$ are equivalent if $H_{Y_Q}(\aa)=H_{Y_Q}(\aa')$ whenever $\aa-\aa'\in \N\bb$, and hence, there are only finitely many interesting codes on each variety $X$, for a fixed matrix $Q$ and prime power $q$ by \cite[Proposition 4.3]{sasop}. The core of this approach is to use the ideal $I(Y_Q)$ for determining these finitely many codes before constructing any code. In order to determine elements $\aa$ corresponding to them, we need to determine $|Y_Q|$ first, obtain a generating system of $I(Y_Q)$ and then analyze the values of the Hilbert function of $I(Y_Q)$, see Example \ref{P(2,2,3,5)}. This yields a finite list of interesting codes together with their lengths and dimensions. The minimum distance can also be computed using the ideal $I(Y_Q)$, see \cite{MinDisVillarrealJPAA} if $X$ is a projective space. When $I(Y_Q)$ is a complete intersection, lower bounds for the minimum distance of ${{\cC}_{\aa,Y_Q}}$, can be computed using \cite[Theorem 3.2 and Theorem 3.9]{sop}. These motivate developing methods and algorithms for computing a generating set of the vanishing ideal $I(Y_Q)$ and checking if it is a complete intersection. 
 
Parameterized codes were defined and studied for the first time by Villarreal, Simis and Renteria in \cite{Algebraicmethodsforparameterizedcodesandinvariantsofvanishingoverfinitefields} when $X$ is a projective space. Among other interesting results, they gave a method for computing a generating set of $I(Y_Q)$ and showed that $I(Y_Q)$ is a lattice ideal of dimension $1$. Later, the lattice of the vanishing ideal is determined more explicitly, when $Q$ is a diagonal matrix in \cite{LVZ}. When $Y_Q$ is the torus $T_X$ lying in the projective space $X=\Pp^n$, that is $Q=I_r$, the main parameters are determined in \cite{SPV}. Dias and Neves generalized parametrized codes from standard projective space to weighted projective spaces $\Pp(w_1,\dots,w_r)$, and showed that the vanishing ideal of the torus $T_X$ is a lattice ideal of dimension $1$ in \cite{Codesoveraweightedtorus}. 

In the first part of the present paper, we use some of the ideas in these papers to extend them into the more general setting of a toric variety. Namely, Section \ref{S:LatticeBasis} gives a very useful description of the unique lattice $L$ whose ideal $I_L$ is nothing but $I(Y_Q)$, see Lemma \ref{lm:equality}. So, a generating set of $I(Y_Q)$ can be obtained from a basis of $L$. Theorem \ref{t:mod} gives a practical description of the lattice $L$ for which $I(Y_Q)=I_L$, leading to Algorithm \ref{a:lattice1}. We include Procedure \ref{pdr:lattice1} implementing this algorithm in \verb|Macaulay2| \cite{Mac2} for computing a basis for $L$. Thus, we can check if $I(Y_Q)$ is a complete intersection easily from this basis, see Remark \ref{rem:CI} and Example \ref{Ex:CI}.

 Section \ref{S:length} gives a direct method for computing the size of $Y_Q$ taking advantage of its parametric representation and giving the length of ${\cC}_{\aa,Y_Q}$. Our second method is inspired from \cite[Proposition 3.3]{Algebraicmethodsforparameterizedcodesandinvariantsofvanishingoverfinitefields} and gives a polytope whose lattice points determine the size of $Y_Q$, extending the corresponding result from the projective space to a general toric variety. However, our polytope is simpler than the polytope given in the special case where $X=\Pp^n$, see Remark \ref{R:polytope} and Example \ref{E:polytope}. 
 
 The main contribution of the paper is Section \ref{S:minimumdistance} in which we give a lower bound for the minimum distance of ${\cC}_{\aa,Y_Q}$, taking advantage of the parametric description of the subgroup $Y_Q$. As an application, we compute the main parameters of the toric codes on Hirzebruch surfaces in Theorem \ref{T:codesOnHirzebruch} generalizing the corresponding result in \cite{Ha1}. We also share a few examples in Section \ref{S:Ex} to reveal the potential of the family of parameterised codes.

\section{Preliminaries}
Let $\K=\F_q$ be a fixed finite field and $\Sigma\subset \R^n$ be a complete simplicial fan with rays $\rho_1,\dots,\rho_r$ generated by the  primitive lattice vectors $\vv_1,\dots,\vv_r\in \Z^n$, respectively. We consider the corresponding toric variety $X$ with a split torus $T_X\cong {(\K^*)^n}$. We assume that the class group $\Cl(X)$ have no torsion. Smooth $X$ with an $n$-dimensional
cone in its fan will satisfy this condition by  \cite[Proposition 4.2.5]{CLSch}. For applications to coding theory smooth toric varieties are sufficient, although we may prefer to study singular varieties such as weighted projective spaces for their simplicity. Given an element $\textbf{a}=(a_{1},\dots,a_{s})\in\Z^s$ we use $\textbf{t}^\textbf{a}$ to denote $t_1^{a_1}\cdots t_s^{a_s}$. Recall the construction of $T_X$ as a geometric quotient (see \cite{Coxhom} and \cite{CLSch}) via the following two key dual exact sequences:
$$\dis \xymatrix{ \mathfrak{P}: 0  \ar[r] & \Z^n \ar[r]^{\phi} & \Z^r \ar[r]^{{\bb}} & \cA \ar[r]& 0},$$  
where $\phi$  denotes  the matrix  $[\vv_1\cdots \vv_r]^T$ and $\cA=\Z^d\cong \Cl X$ for $d=r-n$,
$$\dis \xymatrix{ \mathfrak{P}^*: 1  \ar[r] & \mathcal{G} \ar[r]^{i} & (\K^*)^r \ar[r]^{\pi} & (\K^*)^n \ar[r]& 1},$$
where $\pi:(t_1,\dots, t_r)\mapsto (\mathbf{t}^{\uu_1}, \dots , \mathbf{t}^{\uu_n}),$ with $\uu_1,\dots, \uu_n$ being the columns of $\phi$ and $\mathcal{G}=\ker(\pi)$. Thus, $\uu_1,\dots, \uu_n$ constitute a natural $\Z$-basis for the key lattice $L_{\beta}=\ker \beta= \phi(\Z^n) \subset \Z^r$. The exact sequence $ \mathfrak{P}^*$ gives $T_X$  a quotient representation  $T_X \cong {(\K^*)^n}\cong(\K^*)^r /\mathcal{G}$, meaning that every element in the torus $T_X$ can be represented as $[p_1:\cdots:p_r]:=\mathcal{G}\cdot (p_1,\dots,p_r)$  for some $(p_1,\dots,p_r)\in (\K^*)^r$.

Denote by $S=\K[x_1,\dots,x_r]$ the homogeneous coordinate ring of $X$, which is $\Z^d$-graded by letting $\deg_{\cA}(x_j):=\beta_j:=\beta(e_j)$ using the exact sequence $ \mathfrak{P}$. Thus, $S=\bigoplus_{\aa \in \cA} S_{\aa}$, where $S_{\aa}$ is the finite dimensional vector space spanned by the monomials having degree $\aa$. Moreover, by \cite[Theorem 8.6 and Corollary 8.8]{CombinatorialCommutativeAlgebra}, one can choose $\beta_j\in \N^d$, where $\N$ is the set of non-negative integers. 

 \begin{ex}
\label{ex:Hirzebruch} 	
Let $X=\cl H_{\ell}$ be the Hirzebruch surface corresponding to a fan in $\R^2$ with primitive ray generators $\vv_1=(1,0)$, $\vv_2=(0,1)$, $\vv_3=(-1,\ell)$,
and $\vv_4=(0,-1)$, for any positive integer $\ell$. If $\uu_1=(1,0,-1,0)$, $\uu_2=(0,1,\ell,-1)$ and $\beta=\begin{bmatrix}
1 & 0 & 1& \ell\\
0 & 1 & 0& 1  
\end{bmatrix}$, then we have the following exact sequences 
$$\dis \xymatrix{ \mathfrak{P}: 0  \ar[r] & \Z^2 \ar[r]^{\phi} & \Z^4 \ar[r]^{\beta}& \cA \ar[r]& 0},$$
  
where $\phi=[\uu_1 \: \: \uu_2]$ and $L_{\beta}=\la \uu_1, \uu_2\ra$,
$$\dis \xymatrix{ \mathfrak{P}^*: 1  \ar[r] & \mathcal{G} \ar[r]^{i} & (\K^*)^{4} \ar[r]^{\pi} & (\K^*)^2 \ar[r]& 1}$$
where $\pi:\t\mapsto (t_1t_3^{-1},t_2t_3^{\ell}t_4^{-1}).$ Then $\Cl(X_\Sigma)\cong\cA=\Z^2$ and 
$$\mathcal{G}=\ker(\pi)=\{(t_1,t_2,t_1,t_1^{\ell}t_2)\;|\;t_1,t_2\in\K^*\}\cong (\K^*)^2.$$ Hence,  $ T_X \cong {(\K^*)^2}\cong(\K^*)^{4} /\mathcal{G}$ is the torus of $X=X_\Sigma$. The ring $S=\K[x_1,x_2,x_3,x_4]$ is $\Z^2$-graded via  $$\deg_{\cA}(x_1)=\deg_{\cA}(x_3)=(1,0),\quad \deg_{\cA}(x_2)=(0,1), \quad \deg_{\cA}(x_4)=(\ell,1).$$ 
\end{ex}

\begin{ex}
	\label{ex:weighted} The homogeneous coordinate ring of the weighted projective space $X=\Pp(1,w_1,\dots, w_n)$ is $\K[x_0,x_1,\dots,x_n]$ which is $\Z$-graded where $\deg_{\cA} (x_0)=1$ and $\deg_{\cA}(x_i)=w_i>0$ for $i=1,\dots,n$.  
 If $\beta=[1\; w_1\; \cdots\; w_n],$ and $\textbf{u}_1=(-w_1,1,0,\dots,0),\textbf{u}_2=(-w_2,0,1,0,\dots,0),\dots,\textbf{u}_n=(-w_n,0,\dots,0,1)$, then we have the following exact sequences:
$$\dis \xymatrix{ \mathfrak{P}: 0  \ar[r] & \Z^n \ar[r]^{\phi} & \Z^{n+1} \ar[r]^{\beta}& \cA \ar[r]& 0}$$
where  $\phi=[\uu_1\:\uu_2\cdots\uu_n]$ and $L_\beta= \la\textbf{u}_1, \dots, \uu_n \ra$,
$$\dis \xymatrix{ \mathfrak{P}^*: 1  \ar[r] & \mathcal{G} \ar[r]^{i} & (\K^*)^{n+1} \ar[r]^{\pi} & (\K^*)^n \ar[r]& 1}$$
where $\pi:\t\mapsto (t_0^{-w_1}t_1,t_0^{-w_2}t_2,\dots,t_0^{-w_n}t_{n}).$ Then $\Cl(X_\Sigma)\cong\cA=\Z$ and 
$$\mathcal{G}=\ker(\pi)=\{(t,t^{w_1},t^{w_2},\dots,t^{w_n})\;|\;t\in\K^*\}\cong \K^*.$$ Hence,  $ T_X \cong {(\K^*)^n}\cong(\K^*)^{n+1} /\mathcal{G}$ is the weighted  projective torus of $X=X_\Sigma$, where cones of $\Sigma$ are spanned by all proper subsets of the set $\{\vv_1,\dots,\vv_{n+1}\}$, where $\vv_i$ is the $i$-th row of $\phi$ above.	

\end{ex}

\section{Vanishing Ideals via Saturation of Lattice Basis Ideals} \label{S:LatticeBasis}

In this section, we describe the lattice whose ideal is the vanishing ideal $I(Y_Q)$. For any parameterized toric set, we give an algorithm for computing a basis for the unique lattice defining $I(Y_Q)$. This yields a generating set for $I(Y_Q)$ via saturation. 

Recall that $\m=\m^+-\m^-$, where $\m^+\in \N^r$ (respectively, $\m^-\in \N^r$) is the positive (respectively, negative) part of $\m$, and $\x^{\m}$ denotes the monomial $x_1^{m_1}\cdots x_r^{m_r}$ for any $\m=(m_1,\dots,m_r)\in \N^r$. A binomial ideal is an ideal generated by binomials ${x^\textbf{a}}-x^\textbf{b}$, where $\textbf{a},\textbf{b}\in\N^r$, see \cite{Binomialideals} for foundational properties they have. A subgroup $L\subseteq \Z^r$ is called a lattice, and the following binomial ideal  is called the associated lattice ideal:
$$I_L=\la \x^\textbf{a}-\x^\textbf{b}|\textbf{a}-\textbf{b}\in L\ra=\la{\textbf{x}}^{\textbf{m}^+}-\textbf{x}^{\textbf{m}^-}|\textbf{m}\in L\ra.$$
For any matrix $Q$, we denote by $L_Q$ the lattice $\ker_{\Z}Q$ of integer vectors in $\ker Q$.


\begin{lm}\label{lm:hom} A binomial $f=\textbf {x}^{\textbf{a}}-\textbf {x}^{\textbf{b}}$ in $S$ is homogeneous iff $\textbf{a}-\textbf{b}\in L_\beta$.
\end{lm}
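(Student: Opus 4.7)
The plan is to unwind the definitions: $f = \mathbf{x}^{\mathbf{a}} - \mathbf{x}^{\mathbf{b}}$ is a difference of two monomials, so $f$ is homogeneous with respect to the $\cA$-grading on $S$ if and only if the two monomials have the same degree. By the definition of the grading, $\deg_{\cA}(\mathbf{x}^{\mathbf{a}}) = \sum_{j=1}^{r} a_j \beta_j$, which is exactly the image of $\mathbf{a}$ under the map $\beta : \Z^r \to \cA$ from the exact sequence $\mathfrak{P}$. Similarly for $\mathbf{x}^{\mathbf{b}}$.

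First I would write out the chain of equivalences: $f$ is homogeneous $\iff \deg_{\cA}(\mathbf{x}^{\mathbf{a}}) = \deg_{\cA}(\mathbf{x}^{\mathbf{b}}) \iff \beta(\mathbf{a}) = \beta(\mathbf{b}) \iff \beta(\mathbf{a} - \mathbf{b}) = 0 \iff \mathbf{a} - \mathbf{b} \in \ker_{\Z} \beta = L_{\beta}$. Here the linearity of $\beta$ is used in the third equivalence, and the definition $L_{\beta} := \ker_{\Z} \beta$ in the last one.

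There is no real obstacle; the statement is essentially a restatement of what the $\cA$-grading means, once one identifies $\beta$ with the matrix whose columns are $\beta_1, \dots, \beta_r$. The only mild point to mention for clarity is that, as $\cA$ is torsion-free by assumption (so $\cA = \Z^d$), the kernel of $\beta$ computed over $\Z$ coincides with $\{\mathbf{m} \in \Z^r : \beta \mathbf{m} = 0 \text{ in } \cA\}$ with no ambiguity. I would keep the proof to two or three lines in the paper.
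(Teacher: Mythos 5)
Your proof is correct and follows essentially the same route as the paper: unwind the definition of the $\cA$-grading to get $\deg_{\cA}(\mathbf{x}^{\mathbf{a}})=\beta(\mathbf{a})$, then use linearity of $\beta$ to conclude that homogeneity is equivalent to $\mathbf{a}-\mathbf{b}\in\ker_{\Z}\beta=L_{\beta}$. The remark about torsion-freeness of $\cA$ is a fine clarification but not needed; the paper's proof omits it.
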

\begin{proof} By definition, $\deg_{\cA}(\textbf {x}^{\textbf{a}})={a_1}\deg_{\cA}(x_1)+\cdots+{a_r}\deg_{\cA}(x_r)=\beta_1{a_1}+\cdots+\beta_r{a_r}= \beta({\textbf{a}})$. So, $f$ is homogeneous, that is, $\deg_{\cA}(\textbf {x}^{\textbf{a}})=\deg_{\cA}(\textbf {x}^\textbf{b})$ iff $\beta({\textbf{a}})=\beta({\textbf{b}})$. The latter is equivalent to $\beta({\textbf{a}}-{\textbf{b}})=0$, which holds true iff
$\textbf{a}-\textbf{b}\in  L_\beta$.
\end{proof}

 The fact that $I(Y_Q)$ is a lattice ideal has recently been observed in \cite{Sahin} without describing the corresponding lattice. It is now time to describe the missing lattice.

\begin{lm}\label{lm:equality} The ideal $I(Y_Q)$ is equal to  the lattice ideal $I_{L}$ for $L=\{\m\in L_\beta: Q\m\equiv 0\;\mbox{mod}\;(q-1)\}$.
\end{lm}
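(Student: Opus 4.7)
The plan is to prove both inclusions using the elementary evaluation identity that the monomial $\x^{\m}$ evaluated at a point $[\t^{\q_1}:\cdots:\t^{\q_r}]\in Y_Q$ (via the representative $(\t^{\q_1},\dots,\t^{\q_r})$) equals $\t^{Q\m}$. Combined with Lemma \ref{lm:hom}, this reduces the question of whether a homogeneous binomial $\x^{\m^+}-\x^{\m^-}$ lies in $I(Y_Q)$ to checking the identity $\t^{Q\m}=1$ for all $\t\in(\F_q^*)^s$, which in turn translates into a congruence on $Q\m$ modulo $q-1$.

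For the inclusion $I_{L_1}\subseteq I(Y_Q)$, I would take a generator $\x^{\m^+}-\x^{\m^-}$ of $I_{L_1}$ with $\m\in L_1$. Homogeneity is immediate from Lemma \ref{lm:hom} since $\m\in L_\beta$. For any $\t\in(\F_q^*)^s$, each component $(Q\m)_i$ is a multiple of $q-1$, so Fermat's little theorem gives $t_i^{(Q\m)_i}=1$ for every $i$; hence $\t^{Q\m^+}=\t^{Q\m^-}$ and the binomial vanishes on $Y_Q$.

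For the reverse inclusion $I(Y_Q)\subseteq I_{L_1}$, I would first argue that $I(Y_Q)$ is generated by binomials: by Theorem \ref{t:elim} we have $I(Y_Q)=J\cap S$ for the binomial ideal $J$, and Buchberger's algorithm produces a binomial reduced Gr\"obner basis of $J$, so Theorem \ref{t:elimination} transfers this property to the elimination ideal $J\cap S$. It therefore suffices to show that every binomial $f=\x^\a-\x^\b\in I(Y_Q)$ lies in $I_{L_1}$. Setting $\m=\a-\b$, homogeneity of $f$ together with Lemma \ref{lm:hom} yields $\m\in L_\beta$. Factoring out $\x^{\min(\a,\b)}$ identifies $\a$ and $\b$ with $\m^+$ and $\m^-$, so the vanishing of $f$ on $Y_Q$ becomes $\t^{Q\m^+}=\t^{Q\m^-}$, equivalently $\t^{Q\m}=1$, for all $\t\in(\F_q^*)^s$.

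The main obstacle is converting this torus identity into the componentwise congruence $Q\m\equiv 0\pmod{q-1}$. The cleanest route is to fix an index $i$, specialize $t_j=1$ for $j\neq i$, and set $t_i=g$ for a generator $g$ of the cyclic group $\F_q^*$; the identity collapses to $g^{(Q\m)_i}=1$, forcing $(q-1)\mid(Q\m)_i$. Letting $i$ range over $\{1,\dots,s\}$ yields $\m\in L_1$ and hence $f\in I_{L_1}$. I expect the descent to binomial generators via Theorems \ref{t:elim} and \ref{t:elimination} to require the most care, while the congruence extraction itself is then a one-line character-theoretic argument.
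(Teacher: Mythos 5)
Your proposal is correct and follows essentially the same route as the paper: establish the evaluation identity $\x^{\m}\mapsto\t^{Q\m}$, invoke Lemma \ref{lm:hom} for the homogeneity/$L_\beta$ correspondence, rely on Theorem \ref{t:elim} (via binomial Gr\"obner bases and elimination) to reduce to binomial generators, and extract the componentwise congruence by specializing $\t$ to a primitive element in one slot and $1$ elsewhere. The paper performs exactly this specialization with $\t=(\eta,1,\dots,1)$ and appeals to the same facts, so there is no substantive difference.
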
	
\begin{proof} Before we go further, let us note that $x^\textbf{a}(\t^{\q_1},\dots,\t^{\q_r})=(\t^{\q_1})^{a_{1}}\cdots (\t^{\q_r})^{a_{r}}={\t}^{Q\textbf{a}}$, for $\t\in({\K}^*)^s$. It follows that a binomial $f=x^{\textbf{a}}-x^{\textbf{b}}$ vanishes at a point $(\t^{\q_1},\dots,\t^{\q_r})$ if and only if ${\t}^{Q\textbf{a}}={\t}^{Q\textbf{b}}$. As $\t\in({\K}^*)^s$, this is equivalent to $\t^{Q(\textbf{a}-\textbf{b})}=1$.

To prove $I(Y_Q)\subseteq I_{L}$, take a generator  $f=x^{\textbf{a}}-x^{\textbf{b}}$ of $I(Y_Q)$. As $f$ vanishes on $Y_Q$, we have that $\t^{Q(\textbf{a}-\textbf{b})}=1$ for all $\t\in({\K}^*)^s$. Then, by substituting $\t=(\eta,1,\dots,1)$  in this equality, we observe that $q-1$ divides the first entry of the row matrix $Q(\textbf{a}-\textbf{b})$, where $\eta$ is a generator of the cyclic group $\K^*$ of order $q-1$. Similarly, $q-1$ divides the other entries, and so $Q(\textbf{a}-\textbf{b})\equiv 0\;\mbox{mod}\;(q-1)$. Since $\textbf{a}-\textbf{b}\in L_\beta$ from Lemma \ref{lm:hom}, $f$ being homogeneous, we have $\textbf{a}-\textbf{b}\in {L}$.

Conversely, let $f=x^{\textbf{a}}-x^{\textbf{b}}\in I_L$. Then $\textbf{a}-\textbf{b}\in L_\beta$ and $Q(\textbf{a}-\textbf{b})\equiv 0\;\mbox{mod}\;(q-1)$. This implies that $f$ is homogeneous by Lemma \ref{lm:hom} and that $\t^{Q(\textbf{a}-\textbf{b})}=1$ for all $\t\in({\K}^*)^s$. Hence, $f(\t^{\q_1},\dots,\t^{\q_r})=0$ for any $\t\in({\K}^*)^s$, by the first part. Thus, $f\in I(Y_Q)$ and $I_L\subseteq I(Y_Q)$.
\end{proof}	

For any lattice $L$, the lattice basis ideal of $L$ is the ideal of $S$ generated by the binomials $\textbf{x}^{\textbf{m}^+}-\textbf{x}^{\textbf{m}^-}$ corresponding to the vectors $\textbf{m}$ which constitute a $\Z$ - basis of $L$.

Let $I$ and $J$ be ideals in $S$. Then the ideal 
$$I:J^{\infty}=\{F\in S \ : \ F \cdot J^k \subseteq I \quad \mbox{for some integer} \;\; k\geq 0\}$$
is called the saturation of $I$ with respect to $J$.

\begin{lm}\label{l:latticebasisideal}\cite[Lemma 7.6]{CombinatorialCommutativeAlgebra} Let $L$ be a lattice. The saturation of the lattice basis ideal of $L$ with respect to the ideal $\la x_1\cdots x_r\ra$ is equal to the lattice ideal $I_L$.
\end{lm}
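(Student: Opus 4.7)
The plan is to verify the two inclusions separately. For $(I(B) : \la x_1\cdots x_r\ra^\infty) \subseteq I_L$, note that every generator of the lattice basis ideal $I(B)$ has the form $\x^{\m_i^+} - \x^{\m_i^-}$ with $\m_i$ in a basis of $L$, hence in $L$, giving $I(B) \subseteq I_L$. It then suffices to see that $I_L$ is already $\la x_1\cdots x_r\ra$-saturated; this follows from regrading $S$ by $\Z^r/L$ via $\deg x_i = [e_i]$: each nonzero graded piece of $S/I_L$ is one-dimensional over $\K$ (spanned by the class of any monomial representative), so multiplication by $x_i$ is injective on $S/I_L$ and each $x_i$ is therefore a non-zero-divisor modulo $I_L$.

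For the reverse inclusion, since $I_L$ is generated by binomials $\x^{\m^+} - \x^{\m^-}$ with $\m \in L$, it suffices to place each such binomial in the saturation. Fix a basis $\m_1,\dots,\m_\ell$ of $L$ and write $\m = \sum_{i=1}^{\ell} c_i\m_i$; by replacing $\m_i$ with $-\m_i$ where needed, we may assume every $c_i \geq 0$. Choose $N$ large enough that the vectors
\[
\uu_k \;:=\; \m^+ + (N,\dots,N) \,-\, \sum_{j=1}^{k} c_j\m_j \qquad (k = 0,1,\dots,\ell)
\]
all lie in $\N^r$. Since $\uu_0 = \m^+ + (N,\dots,N)$ and $\uu_\ell = \m^- + (N,\dots,N)$, the telescoping identity
\[
(x_1\cdots x_r)^N\bigl(\x^{\m^+} - \x^{\m^-}\bigr) \;=\; \x^{\uu_0} - \x^{\uu_\ell} \;=\; \sum_{k=1}^{\ell}\bigl(\x^{\uu_{k-1}} - \x^{\uu_k}\bigr)
\]
exhibits $(x_1\cdots x_r)^N(\x^{\m^+} - \x^{\m^-})$ as an $S$-linear combination of generators of $I(B)$. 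Indeed, each summand factors as
\[
\x^{\uu_{k-1}} - \x^{\uu_k} \;=\; \x^{\min(\uu_{k-1},\uu_k)}\bigl((\x^{\m_k^+})^{c_k} - (\x^{\m_k^-})^{c_k}\bigr),
\]
and the second factor is divisible in $S$ by $\x^{\m_k^+} - \x^{\m_k^-}\in I(B)$ via the standard identity $a^{c_k}-b^{c_k} = (a-b)(a^{c_k-1}+\cdots+b^{c_k-1})$.

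The main obstacle is the bookkeeping that ensures a single $N$ works simultaneously for every $k$, keeping the whole trajectory $\uu_0,\dots,\uu_\ell$ in the nonnegative orthant; without this, the factorization of each telescoping step would no longer live in $S$. Taking $N$ to exceed the coordinatewise maximum of the partial sums $\sum_{j\leq k} c_j\m_j$ (in absolute value) handles this uniformly, after which the binomial manipulation is routine.
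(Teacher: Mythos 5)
The paper does not prove this lemma; it is cited directly from Miller and Sturmfels \cite{CombinatorialCommutativeAlgebra}, so there is no internal argument for yours to diverge from. Your proof is correct and is essentially the standard one underlying that reference. For the inclusion of the saturation in $I_L$, the passage through the $\Z^r/L$-grading of $S/I_L$ is exactly right: once each $x_i$ is a non-zero-divisor modulo $I_L$, so is $x_1\cdots x_r$, and the saturation of the smaller ideal $I(B)$ cannot exceed the saturation of $I_L$, which is $I_L$ itself. The one ingredient you use silently is that $I_L$ contains no monomial, which is needed both so that a nonzero graded piece really is one-dimensional (spanned by $\overline{\x^{\ba}}$ rather than being zero) and so that $x_i\cdot\overline{\x^{\ba}}=\overline{\x^{\ba+e_i}}$ is again nonzero; a one-line justification is to evaluate at $(1,\dots,1)$, where every binomial generator $\x^{\m^+}-\x^{\m^-}$ of $I_L$ vanishes but no monomial does. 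For the reverse inclusion, the telescoping trajectory $\uu_0,\dots,\uu_\ell$ is the right device: the coordinatewise identity $\min(\uu_{k-1},\uu_k)=\uu_{k-1}-c_k\m_k^+=\uu_k-c_k\m_k^-$ checks out case by case on the sign of each coordinate of $\m_k$, the factorization $a^{c}-b^{c}=(a-b)(a^{c-1}+\cdots+b^{c-1})$ places each telescoping step in $I(B)$, and your uniform choice of $N$ does keep the whole trajectory in $\N^r$ so that every step is a genuine element of $S$. With the small monomial remark supplied, the argument is complete.
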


Thus, we can obtain generators of $I(Y_Q)=I_{L}$ from a $\Z$-basis of $L$. Although the lattice $L$ in Lemma \ref{lm:equality} is inevitable conceptually, finding its basis is a difficult task in general. The following result gives another description of $L$ leading to an algorithm computing its basis.

\begin{tm}\label{t:mod} Let $\pi_s:{\Z}^{n+s}\to{\Z}^{n}$ be the projection map
sending $(c_1,\dots,c_n,c_{n+1},\dots,c_{n+s})$ to $(c_1,\dots,c_n)$.  Then $I(Y_Q)=I_L$, for the lattice $L=\{\phi\textbf{c}: \textbf{c}\in\pi_s\left(\mbox{ker}_{\Z}[Q\phi|(q-1)I_s]\right)\}$. Furthermore, columns of the matrix $\phi M$ constitute a basis for $L$, where $M$ is a matrix whose columns are the first $n$ coordinates of the generators of 
$\mbox{ker}_{\Z}[Q\phi|(q-1)I_s]$.
\end{tm}	
\begin{proof} We have that $I(Y_Q)=I_{L_1}$ where $L_1=\{\m\in L_\beta: Q\m\equiv 0\;\mbox{ mod}\; (q-1)\}$ by Lemma \ref{lm:equality}. Therefore it is enough to prove that $L=L_1$. Since $\mbox{Im}\phi=L_\beta$ by the exact sequence $ \mathfrak{P}$, it follows that $\m\in L_\beta$ iff $\m=\phi\textbf{c}$ for some $\textbf{c}\in \Z^n$. This means that
$$L_1=\{\phi\textbf{c}:Q \phi\textbf{c}\equiv 0\;\mbox{mod}\; (q-1)\;\mbox{and} ~~\textbf{c}\in {\Z}^n\}.$$
Take  $\phi \textbf{c} \in L $ so that $\textbf{c}=(c_1,\dots,c_n)\in \pi_s\left(\mbox{ker}_{\Z}[Q\phi|(q-1)I_s]\right)$. Then there are $c_{n+1},\dots,c_{n+s} \in \Z$ such that $[Q\phi|(q-1)I_s](c_1,\dots,c_n,c_{n+1},\dots,c_{n+s})=0$. This is equivalent to 
$$Q\phi(c_1,\dots,c_n)+(q-1)I_s(c_{n+1},\dots,c_{n+s})=0, \; \mbox{or}$$
$$Q\phi\textbf{c}=-(q-1)(c_{n+1},\dots,c_{n+s}).$$
This proves that $Q \phi\textbf{c}\equiv 0\;\mbox{mod}\;(q-1)$. Thus $\phi \textbf{c}\in L_1.$

For the converse, take $\phi\textbf{c}\in L_1$. Then $Q\phi \textbf{c} \equiv 0\;\mbox{mod}\;(q-1)$. It follows that
$$Q\phi\textbf{c}=(q-1)(c_{n+1},\dots,c_{n+s})$$
for some $c_{n+1},\dots,c_{n+s}\in {\Z}$. Thus, we have $[Q\phi|(q-1)I_s](c_1,\dots,c_n,-c_{n+1},\dots,-c_{n+s})=0.$
Hence, we have $\textbf{c}=\pi_s(c_1,\dots,c_n,-c_{n+1},\dots,-c_{n+s})\in \pi_s\left(\mbox{ker}_{\Z}[Q\phi|(q-1)I_s]\right).$ Thus, $\phi\textbf{c}\in L$, completing the proof of the claim that $I(Y_Q)=I_L$.

We next prove that the columns of $\phi M$ form a basis for $L$, where $M$ is a matrix whose columns are the first $n$ coordinates of the generators of 
$\mbox{ker}_{\Z}[Q\phi|(q-1)I_s]$. As the matrix $B=[Q\phi|(q-1)I_s]$ has rank $s$, its kernel $\mbox{ker}_{\Z}B$ has rank $n$. If $A$ is the matrix whose columns $A_1,\dots, A_n$ form a basis for $\mbox{ker}_{\Z}B$, then $\mbox{im} (A)=\mbox{ker}_{\Z}B$ and that $M=[I_n|0_{n\times s}]A$. Take  any element $\phi\textbf{c}\in L $, where $\textbf{c}\in\pi_s\left(\mbox{ker}_{\Z}B\right)$. Thus, we can write $\textbf{c}={A_1}k_1+\cdots+{A_n}k_n=A[k_1\cdots k_n]$ for some $k_1,\dots, k_n\in\Z$ yielding $\phi\textbf{c}=\phi M [k_1\cdots k_n]$. Therefore $L$ is spanned by $n=\rank L$ columns of the $r\times n$ matrix $\phi M$. Hence, these columns constitute a basis for $L$.
\end{proof}	

Theorem \ref{t:mod} leads to the following algorithm for computing a $\Z$-basis of the lattice $L$ for which we have $I(Y_Q)=I_L$. 

\begin{algorithm}
\caption{ Computing a basis for the lattice $L$ such that  $I_L=I(Y_Q)$.}\label{a:lattice1}
\begin{flushleft}
\hspace*{\algorithmicindent} \textbf{Input} The matrices $Q\in M_{s\times r}(\Z)$, $\phi\in M_{r\times n}(\Z)$ and a prime power $q$.\\  
\hspace*{\algorithmicindent} \textbf{Output} A basis of $L$.
\end{flushleft}
\begin{algorithmic}[1]
\State Find the generators of the lattice $\mbox{ker}_{\Z}[Q\phi|(q-1)I_s]$.
\State Find the matrix $M$ whose columns are the first $n$ coordinates of the generators of $\mbox{ker}_{\Z}[Q\phi|(q-1)I_s]$.
\State Compute  the matrix $\phi M$ whose columns are a $\Z$-basis of the lattice $L$
\end{algorithmic}
\end{algorithm}

The algorithm can be implemented in \verb|Macaulay2| as follows.

\begin{pdr}\label{pdr:lattice1} The command \verb|ML| gives the matrix whose columns are generators of the lattice $L$. 
{\scriptsize
\begin{verbatim}
i2: s=numRows Q;n=numColumns Phi;
i3: ML=Phi*(id_(ZZ^n)|(random(ZZ^n,ZZ^s))*0)*(syz (Q*Phi|(q-1)*(id_(ZZ^s))))
\end{verbatim}}
\end{pdr}
\begin{pdr}\label{pdr:saturation} A generating set for $I(Y_Q)$ via saturation.
{\scriptsize\begin{verbatim}
i4: r=numRows Phi; (D,P,K) = smithNormalForm Phi; Beta=P^{n..r-1};
i5: S=ZZ/q[x_1..x_r,Degrees=>transpose entries Beta];
i6: toBinomial = (b,S) -> (top := 1_S; bottom := 1_S;
    scan(#b, i -> if b_i > 0 then top = top * S_i^(b_i)
    else if b_i < 0 then bottom = bottom * S_i^(-b_i)); top - bottom);
i7: IdealYQ=(ML,S)->(J = ideal apply(entries transpose ML, b -> toBinomial(b,S)); 
    scan(gens S, f-> J=saturate(J,f));J)
i8:IYQ=IdealYQ(ML,S)    
\end{verbatim}}
\end{pdr}
\begin{ex} \label{ciex}
Let $X=\cl H_{2}$ over $\F_{11}$ and $Q=[1\;  2\;  3\;  4 ]$. So, we have the following input:
{\scriptsize\begin{verbatim}
i1 : q=11;Phi=matrix{{1,0},{0,1},{-1,2},{0,-1}}; Q=matrix {{1,2,3,4}};
\end{verbatim}}	
\noindent Procedure \ref{pdr:lattice1} gives the following matrix whose columns constitute a basis of $L$:
 $$\verb|ML|={\begin{bmatrix}~~2 & ~~1 & ~~0&-1\\-5 & ~~0 & ~~5 & ~0\\\end{bmatrix}}^T.$$ 
Finally, we determine $I(Y_Q)=I_L$ using Procedure \ref{pdr:saturation} and get $I_L=\la x_1^2x_2-x_4, x_1^5-x_3^5 \ra$.
\end{ex}

\begin{rema} \label{rem:CI}Another advantage of finding the matrix \verb|ML| giving a basis for the lattice is that one can confirm if the lattice ideal is a complete intersection immediately, by checking if \verb|ML| is mixed dominating.
\end{rema}

\begin{defi} Let $A$ be matrix whose entries are all integers. $A$ is called mixed if there is a positive and a negative entry in every column. If no square submatrix of $A$ is mixed, it is called dominating.
\end{defi} 

\begin{tm}\label{t:mixeddominating}\cite[Theorem 3.9]{MT} Let $L\subseteq\Z^r$ be a lattice with the property that $L\cap \N^r={0}$. Then, $I_L$ is complete intersection $\iff L$ has a basis ${\m_1,\dots,\m_k}$ such that the matrix $[\m_1\cdots \m_k]$ is mixed dominating. In the affirmative case, we have 
$$ I_L=\la \textbf{x}^{\textbf{m}_1^+}-\textbf{x}^{\textbf{m}_1^-},\dots, \textbf{x}^{\textbf{m}_k^+}-\textbf{x}^{\textbf{m}_k^-} \ra. $$
\end{tm}

Using Theorem \ref{t:mixeddominating}, one can confirm when $I(Y_Q)=I_L$ is a complete intersection by looking at a basis of the lattice $L$.

\begin{ex} \label{Ex:CI}
Let $X=\cl H_{\ell}$ be the Hirzebruch surface over $\F_{q}$, where $q$ is odd. For any positive integers $q_1$ and $q_2$, consider $Q=[q_1\;  q_2\;  q_1+2\;  \ell q_1+q_2]$. We will compute generators of $I(Y_Q)$ for all $q$ at once using Lemma \ref{lm:equality}.  The key observation is that $Y_Q=Y_{Q'}$ for the matrix $Q'=[0\;  0\;  2\;  0]$, because we have $[t^{q_1}:t^{q_2}:t^{q_1+2}:t^{\ell q_1+q_2}]=[1:1:t^2:1]$ in $X$ for all $t\in\K^*$ from Example \ref{ex:Hirzebruch}. Recall that the ideal $ \mathrm{I}(Y_{Q'})=\mathrm{I}_{L}$ for the lattice described by $ L=\{\m\in L_{\beta} \quad | \quad  Q'\cdot \m \equiv 0 \quad \mbox{mod}(q-1)\}.$
Since $L_{\beta}$ is spanned by the columns $\uu_1$ and $\uu_2$ of the matrix $$\phi=\begin{bmatrix}
1 & 0 & -1& ~~0 \\
0 & 1 & ~~~\ell& -1
\end{bmatrix}^T,$$ 
it follows that $\m\in L_{\beta}$ if and only if $\m=\uu_1a_1+\uu_2a_2=(a_1,a_2,-a_1+\ell a_2,-a_2),$ for some $a_1,a_2\in\Z$. Thus, we obtain $Q'\cdot \m=-2a_1+2\ell a_2$ for $\m\in L_{\beta}$. Therefore, $\m\in L \iff -2a_1+2\ell a_2=(q-1)k$, for some $k\in \Z$. Since $q-1$ is even, the last condition is equivalent to $a_1=\ell a_2-k\frac{q-1}{2}$ in which case $\m=a_2\m_1-k\m_2$ , where $\m_1=\ell \uu_1+\uu_2$ and $\m_2=\left(\frac{q-1}{2}\right)\uu_1$. Therefore, the matrix $\verb|ML|$ whose columns $\m_1$ and $\m_2$ constitute a basis of $L$, is given by:   
$$\verb|ML|=\begin{bmatrix}~\ell & 1 & ~~~0  & ~-1\\~(q-1)/2 & ~~~0 & -(q-1)/2&~~~0
\end{bmatrix}^T.$$ 
Since $\verb|ML|$ is mixed dominating, it follows that $I(Y_Q)=\mathrm{I}(Y_{Q'})=I_L$ is a complete intersection. Therefore, without the saturation Procedure \ref{pdr:saturation}, we get  $I(Y_Q)=I_L=\la x_1^{\ell}x_2-x_4,x_1^{(q-1)/2}-x_3^{(q-1)/2}\ra$ immediately. Notice that by taking $q=11$, $q_1=1$, $q_2=2$ and $\ell=2$, we recover the Example \ref{ciex}.

\end{ex}

\section{ The order of the subgroup $Y_Q$}\label{S:length}

In this section, we give an algorithm computing the size of $Y_Q$, which is the length of the parameterized toric code ${\cC}_{\aa,Y_Q}$, directly using the parameterization of $Y_Q$. The order of this subgroup can also be computed using the vanishing ideal of $Y_Q$ only if an element of $\reg(Y_Q)$ is known, which is a difficult task to achieve. When $Y_Q$ is a complete intersection of hypersurfaces of degrees $\aa_1,\dots,\aa_n$, it is shown that $\aa_1+\cdots+\aa_n \in \reg(Y_Q)$, so that the order is $H_{Y_Q}(\aa_1+\cdots+\aa_n)$, see \cite[Theorem 3.6]{sasop}. However, if $Y_Q$ is not a complete intersection, no specific element of $\reg(Y_Q)$ is known. In these cases, it is natural to use the size $|Y_Q|$ in order to determine $\reg(Y_Q)$, see Example \ref{P(2,2,3,5)}.

It is clear that $T_X$ and $Y_Q$ are groups under the componentwise multiplication $$[p_1:\cdots:p_r][p'_1:\cdots :p'_r]=[p_1p'_1:\cdots: p_rp'_r]$$
and that the map 
$$\varphi_Q : (\K^{*})^{s}\to Y_Q,  \quad  \t\to[\t^{\q_1}:\cdots:\t^{\q_r}] $$ 

\noindent is a group epimorphism. It follows that $Y_Q\cong (\K^{*})^{s}/ \mbox{ker}(\varphi_Q)$ and so,
$$|Y_Q|=|(\K^{*})^{s}|/|\mbox{ker}(\varphi_Q)|=(q-1)^s/|\mbox{ker}(\varphi_Q)|.$$ Hence, the length of the code ${\cC}_{\aa,Y_Q}$ depends on $|\mbox{ker}(\varphi_Q)|$.

Let $\square_q=[0,q-2]^s$ be the hypercube inside $\R^s$ determined by the field $\K=\F_q$ and $\eta$ be a generator of $\K^*$. Let $\textbf{H}=\{\hh \in \square_q \cap \Z^s \quad|\quad \hh Q\phi\equiv 0\;\mbox{mod}\;q-1\}$. We first prove that there is a one to one correspondence between the kernel $\mbox{ker}(\varphi_Q)$ and the set $\textbf{H}$.
\begin{pro}\label{P:length1}  We have $\mbox{ker}(\varphi_Q) =\{(\eta^{h_1},\dots,\eta^{h_s})|\hh=(h_1,\dots,h_s)\in \textbf{H}\}$ and thus
$|\mbox{ker}(\varphi_Q)|=|\textbf{H}|$.
\end{pro}
\begin{proof} Let $\t \in \mbox{ker}(\varphi_Q)\subseteq (\K^*)^s$ . Then $[{\t}^{\q_{1}}:\cdots:{\t}^{\q_{r}}]=[1:\cdots:1]$, that is,$({\t}^{\q_{1}},\dots,{\t}^{\q_{r}})$ is element of the orbit $\mathcal{G}(1,\dots,1)=\mathcal{G}=\{\x \in (\K^*)^r \:|\:  \x^{\m}=1 \;\mbox{for all}\;\m\in  L_\beta\}$. Since $L_\beta=\phi(\Z^n)$, we have $\m=\phi \textbf{c}\in L_{\bb}$ for any $\textbf{c}\in \Z^n$ and thus
\begin{equation}\label{e:G}
\x^{\m}(\t^{\q_1},\dots,{\t}^{\q_r})=\t^{Q \m}=\t^{Q \phi \textbf{c}}=1, \quad \mbox{for any}\quad \textbf{c}\in \Z^n.
\end{equation} 
Since every $\t$ in $(\K^*)^s$ satisfies $\t =(\eta^{h_1},\dots,\eta^{h_s})$  for some $\hh=(h_1,\dots,h_s)\in \square_q \cap \Z^s$, the equality (\ref{e:G}) implies that $\eta^{\hh Q \phi \textbf{c}}=1$, for all $\textbf{c}\in \Z^n$. Thus, $\hh Q \phi \textbf{c} \equiv 0\;\mbox{mod}\;q-1$ for all $\textbf{c}\in \Z^n$. By choosing $\textbf{c}=\textbf{e}_i$, for all $i=1,\dots,n$, where $\textbf{e}_i$ is a standard basis vector of $\Z^n$,  we observe that $\hh Q \phi \equiv 0\;\mbox{mod}\;q-1$. This implies that $\mbox{ker}(\varphi_Q)\subseteq \{(\eta^{h_1},\dots,\eta^{h_s})|\hh\in \textbf{H}\}$. The other inclusion is straightforward, completing the first part of the proof. Since the order of $\eta$ is $q-1$ and the integers $h_i$ lie in $[0,q-2]$, it is clear that the correspondence between $\mbox{ker}(\varphi_Q)$ and $\textbf{H}$ is one to one.
\end{proof}

\begin{pdr}\label{pdr:length} Given matrices $Q$ and $\phi$, and the prime power $q$, the following \verb|Macaulay2|  procedure allows one to compute the length of ${\cC}_{\aa,Y_Q}$. The list \verb|A| in the fifth step consists of the elements in $\square_q \cap \Z^s$. In the sixth step, we check whether the elements of $\square_q \cap \Z^s$ is in the set $\textbf{H}$ or not and compute $k=|\textbf{H}|$. 
{\scriptsize\begin{verbatim}
i2 : r=numRows Phi;s=numRows Q;n=numColumns Phi;k=0;
i3 : L=for i from 1 to q-1 list i;
i4 : L= set L;L=L^**(s);L=toList L;
i5 : A= apply(L,i->toList deepSplice i)
i6 : scan(A,i-> if ((matrix{i}*Q*Phi)%(map((ZZ)^1,n,(i,j)->(q-1))))
==(matrix mutableMatrix(ZZ,1,n)) then k=k+1);
i7 : N=((q-1)^s)/k
\end{verbatim}}
\end{pdr}

\begin{ex} Let $X=\cl H_{2}$ over $\F_{11}$ and $Q=[1\;  2\;  3\;  4 ]$ as in Example \ref{ciex}. Let us calculate the length of the codes arising from $Q$ using the Hilbert function of the vanishing ideal $I_L=\la x_1^2x_2-x_4, x_1^5-x_3^5 \ra$ found there. As the degrees of the variables are $$\deg(x_1)=\deg(x_3)=\bb_1=\bb_3=(1,0), \quad \deg(x_2)=\bb_2=(0,1),\quad \deg(x_4)=\bb_4=(2,1),$$ the degrees of the generators are $\alpha_1=(2,1)$ and $\alpha_2=(5,0)$. By \cite[Theorem 3.1]{sasop}, we can assure that the element $\aa_1+\aa_2=(7,1) \in \reg(Y_Q)$, so that the size is the value of the Hilbert function at $(7,1)$.  Thus, we compute $|Y_Q|=5$ by the command below, right after computing $I(Y_Q)$ using the Procedure \ref{pdr:saturation}:
{\scriptsize\begin{verbatim}
i9 : hilbertFunction({7,1},IYQ)
\end{verbatim}}
The same length can be computed directly using the Procedure \ref{pdr:length} with the following input:
 {\scriptsize\begin{verbatim}
i1 : q=11;Phi=matrix{{1,0},{0,1},{-1,2},{0,-1}}; Q=matrix {{1,2,3,4}};
\end{verbatim}}
 \end{ex}
 
The following example is to illustrate the advantage of computing length beforehand in order to determine $\reg(Y_Q)$ and to obtain a finite list of interesting codes. Notice that the order of $Y_Q$ cannot be computed as in the previous example using \cite[Theorem 3.1]{sasop} since the vanishing ideal is not a complete intersection.
  \begin{ex}\label{P(2,2,3,5)} Fix $q=5$ and consider the incidence matrix $Q$ of the square shaped graph with vertices $V=\{1,2,3,4\}$ and edges $E=\{\{1,2\},\{2,3\},\{3,4\},\{1,4\}\}$. We first compute a minimal generating set for the vanishing ideal of the subgroup $Y_Q \subseteq \Pp(2,2,3,5)$:
$$x_1^4-x_2^4, ~~~x_1^2x_2^6-x_3^2x_4^2, ~~~ x_3^6-x_1^2x_2^2x_4^2, ~~~ x_2^4x_3^4-x_4^4. $$
It follows that $I(Y_Q)$ is not a complete intersection. So, we cannot compute the order $|Y_Q|$ as before. We calculate $|Y_Q|$ directly using the Procedure \ref{pdr:length} with the following input:
\begin{verbatim}
i1 : q=5; Phi= syz matrix {2,2,3,5}; 
Q=matrix{{1,0,0,1},{1,1,0,0},{0,1,1,0},{0,0,1,1}};
\end{verbatim}
This reveals that $|Y_Q|=32$. Hence, $Y_Q$ contains half of the points inside the torus $T_X$. 
 
Since $|Y_Q|=32$, $\reg(Y_Q)=\{i \in \N\bb \: : \: H_{Y_Q}(i)=32\}.$ Using the conditional non-decreasing behavior of the Hilbert function noted in \cite{sasop}, we see that $H_{Y_Q}(i)\leq H_{Y_Q}(i+w)\leq 32$, for $w\in \{2,3,5\}$ and for all $i>0$. This means that if $H_{Y_Q}(i)=32$ for some $i=i_0$ then $H_{Y_Q}(i_0+2)=H_{Y_Q}(i_0+3)=32$ and thus $H_{Y_Q}(i_0+j)=32$ for all $j>3$. Thus, we just need to determine $i_0$ with this property and $H_{Y_Q}(i_0+1)$. The following command finds these values:
\begin{verbatim}
for i from 0 to 100
do (
    if hilbertFunction(i,IYQ)<32 then print hilbertFunction(i,IYQ)
    else stop 
    and print [i,hilbertFunction(i,IYQ),hilbertFunction(i+1,IYQ)]  
    );
\end{verbatim}

The output is ${1,0,2,1,3,3,5,5,7,8,10,11,14,14,18,19,21,24,24,28,27,31,29,
[23, 32, 31]}$. Here, $[23, 32, 31]$ means that $i_0=23$, $H_{Y_Q}(i_0)=32$ and $H_{Y_Q}(i_0+1)=31$, and hence we determine the regularity as $\reg(Y_Q)=\{23\}\cup \{25+\N\}$. So, it suffices to consider the codes corresponding to $\aa$ in the set $$\N\bb\setminus \reg(Y_Q)=\{2,3,4,5,6,7,8,9,10,11,12,13,14,15,16,17,18,19,20,21,22\}\cup\{24\}.$$ Therefore, we obtain a finite list of interesting codes along with two of their parameters; the length is $32$ and the dimensions are found above as the values of the Hilbert function of $I(Y_Q)$.
\end{ex}
 
  We conclude the section with a polyhedral method to compute the size of the set $Y_Q$. We next prove that the elements of the kernel $\mbox{ker}(\varphi_Q)$ correspond bijectively to lattice points lying inside the polytope $\cl P=\{(\hh,\kk)\in  \R^s \times \R^n \quad|\quad \hh Q\phi=(q-1)\kk \quad \mbox{and} \quad  \hh \in \square_q \}$.
\begin{pro} \label{P:length2} We have $|\mbox{ker}(\varphi_Q)|=|\cl P \cap \Z^{s+n}|$.
\end{pro}
\begin{proof} By Proposition \ref{P:length1}, there is a one to one correspondence between  $\mbox{ker}(\varphi_Q)$ and $\textbf{H}$. Hence, it suffices to show that there is a bijection between $\textbf{H}$ and $\cl P \cap \Z^{s+n}$. If $\hh \in \textbf{H}$, then  $\hh Q \phi \equiv 0\;\mbox{mod}\;q-1$. Thus, there is some $\kk=({k_1},\dots,{k_n})\in \Z^n$ such that $\hh Q \phi=(q-1)\kk$. Therefore, $(\hh,\kk)\in\cl P \cap \Z^{s+n}$. Notice that there is exactly one such $\kk$ for every $\hh$, as $(q-1)\kk=(q-1)\kk'$ implies $\kk=\kk'$. Conversely, if
$(\hh,\kk)\in\cl P \cap \Z^{s+n}$, then $\hh Q \phi=(q-1)\kk$ so that $\hh Q \phi \equiv 0\;\mbox{mod}\;q-1$, completing the proof.
\end{proof}

\begin{rema} \label{R:polytope} When $X=\Pp^{r-1}$ is the $n=r-1$ dimensional projective space, \cite[Proposition 3.3]{Algebraicmethodsforparameterizedcodesandinvariantsofvanishingoverfinitefields} gives a bijection between the set $\mbox{ker}(\varphi_Q)$ and the lattice points in the polytope $\textbf{P}$ so that $|\mbox{ker}(\varphi_Q)|=|\textbf{P} \cap \Z^{s+r+1}|$, where 
$$\textbf{P}=\{(\hh,\lambda,\mu) \in \R^s \times \R^r \times \R \quad |\quad \hh Q=(q-1)\lambda+ \mu \textbf{1}, \quad \hh\in \square_q \quad \mbox{and} \quad  0\leq \mu \leq q-2\} \quad \mbox{and} \quad \textbf{1}\in \Z^r.$$
Even in this special case, toric point of view improves upon \cite[Proposition 3.3]{Algebraicmethodsforparameterizedcodesandinvariantsofvanishingoverfinitefields} in the sense that our polytope $\cl P$ lies in $\R^{s+n}=\R^{s+r-1}$ whereas $\textbf{P}$ lies in $\R^{s+r+1}$, which increases the complexity of computing the lattice points.
\end{rema}
\begin{ex}\label{E:polytope} Let us revisit \cite[Example 3.4]{Algebraicmethodsforparameterizedcodesandinvariantsofvanishingoverfinitefields}. So, $X=\Pp^3$ over $\F_5$, $\phi$ is the matrix with columns $(-1,1,0,0)$, $(-1,0,1,0)$ and $(-1,0,0,1)$. Consider the incidence matrix $Q$ of the square shaped graph with vertices $V=\{1,2,3,4\}$ and edges $E=\{\{1,2\},\{2,3\},\{3,4\},\{1,4\}\}$. So, $Q$ is the matrix with columns $(1,1,0,0)$, $(0,1,1,0)$, $(0,0,1,1)$ and $(1,0,0,1)$. 
 Using Sage \cite{sage} we compute in $0,04 $ seconds the following $16$ integral points $(h_1,h_2,h_3,h_4,k_1,k_2,k_3)$ of the $4$ dimensional compact polytope $\cl P\subset \R^7$ which is the convex hull of $16$ vertices.
{\scriptsize\begin{verbatim}
 (0, 0, 0, 0, 0, 0, 0), (0, 1, 0, 1, 0, 0, 0), (0, 2, 0, 2, 0, 0, 0), (0, 3, 0, 3, 0, 0, 0),
 (1, 0, 1, 0, 0, 0, 0), (1, 1, 1, 1, 0, 0, 0), (1, 2, 1, 2, 0, 0, 0), (1, 3, 1, 3, 0, 0, 0),
 (2, 0, 2, 0, 0, 0, 0), (2, 1, 2, 1, 0, 0, 0), (2, 2, 2, 2, 0, 0, 0), (2, 3, 2, 3, 0, 0, 0),
 (3, 0, 3, 0, 0, 0, 0), (3, 1, 3, 1, 0, 0, 0), (3, 2, 3, 2, 0, 0, 0), (3, 3, 3, 3, 0, 0, 0).
\end{verbatim}}
Therefore, the $16$ points in the subgroup $Y_Q$ are found to be $(\eta^{h_1},\eta^{h_2},\eta^{h_3},\eta^{h_4})$ for $(h_1,h_2,h_3,h_4)$ that appeared above. We also compute in $1$ second the $16$ lattice points of the $5$ dimensional polytope $\textbf{P}\subset \R^9$ which is the convex hull of $32$ vertices.
\end{ex}

\section{Parameterized Toric Codes ${\cC}_{\aa,Y_Q}$}\label{S:minimumdistance}	

In this section, we apply algebraico-geometric techniques developed in previous sections to evaluation codes on subgroups $Y_Q$. Recall that
these linear codes are images of the following evaluation map
$$\ev_{Y_Q}:S_\aa\to \K^N,\quad F\mapsto (F(P_1),\dots,F(P_N)).$$ 
The code ${\cC}_{\aa,Y_Q}:=\text{ev}_{Y_Q}(S_\aa) \subseteq \F_q^N$ is called the \textbf{parameterized toric code} associated to $Q$. There are $3$ main parameters $[N,K,\delta]$ of a linear code. The \textit{length} $N$ of ${\cC}_{\aa,Y_Q}$ is the order $|Y_Q|$ of the subgroup in our case studied in Section \ref{S:length}. The \textit{dimension} of ${\cC}_{\aa,Y_Q}$, denoted $K=\dim_{\K}({\cC}_{\aa,Y_Q})$, is the dimension of the image as a subspace of $\F_q^N$. The number of non-zero entries in any $c\in{{\cC}_{\aa,Y_Q}}$ is called its \textit{weight} and  \textit{minimum distance} $\delta$ of ${\cC}_{\aa,Y_Q}$ is the smallest weight among all codewords $c\in{{\cC}_{\aa,Y_Q}}\setminus\{0\}$. 	These parameters are related by the Singleton's bound given by $\delta\leq N+1-K$. A code is called MDS (maximum distance separable), if $\delta$ attains its maximum value, i.e. $\delta=N+1-K$.

Recall from Section \ref{S:length} that $\varphi_Q : (\K^{*})^{s}\to Y_Q,  \quad  \t\to[\t^{\q_1}:\cdots:\t^{\q_r}] $ and $|\textbf{H}|=|\mbox{ker}(\varphi_Q)|$ so that the length of the code is $|Y_Q|=(q-1)^s/|\textbf{H}|$. This map will also be used to give a lower bound on the minimum distance as we discuss now. A key observation is that the composition $F\circ\varphi_Q  $ defines a map $ (\K^{*})^{s}\to \K$, for a polynomial $F\in S_{\aa}$. Thus, $(F\circ\varphi_Q)(t_1,\dots,t_s)=F(\t^{\q_1}:\cdots:\t^{\q_r})$, for any $(t_1,\dots,t_s)\in (\K^{*})^{s}$. 

As the weight of the codeword $ev_{Y_Q}(F)$ is determined by the number of zeros of $F$ inside $Y_Q$, the idea is to compute this number by counting zeros of $F\circ\varphi_Q$ inside $(\K^{*})^{s}$. The following result will be used for this purpose.

\begin{lm}\label{L:numberzero}\cite[Lemma 3.2]{MinDisProjTorusVillarreal} Let $ G(y_1,y_2,\dots,y_s)$ be a non-zero polynomial over $\K=\F_q$ of total degree $d$. Then, the number of zeros of $G$ in $(\K^*)^s$ satisfies $|V(G)\cap(\K^*)^s|\leq d(q-1)^{s-1}$. $\hfill \Box$
\end{lm}

Let $\textbf{x}^{\textbf {a}}=x_1^{a_1}\cdots x_r^{a_r}\in S_\alpha$.  Substituting $x_i=\textbf {y}^{\q_{i}}$ in $\textbf{x}^{\textbf {a}}$ yields the monomial $\textbf{y}^{Q\textbf{a}}=y_1^{Q_1\textbf{a}}\cdots y_s^{Q_s\textbf{a}}$ and so $\mbox{deg}_{y_i}(\textbf{y}^{Q\textbf{a}})=Q_i \textbf{a}$, where $Q_i$ is the $i-$th row of $Q$. Let $\overline{Q_i \textbf{a}}$ be remainder of $Q_i \textbf{a}$ upon division by $q-1$. Then the following number will be crucial in our lower bound:
 $$d(\alpha,Q)=\mbox{max}\{\overline{Q_1\textbf{a}}+\cdots+\overline{Q_s\textbf{a}}\quad |\quad \textbf{x}^{\textbf {a}}\in S_{\aa}\}.$$

\begin{tm}\label{t:mindislower} The minimum distance of the code ${\cC}_{\aa,Y_Q}$ satisfies $$\delta({\cC}_{\aa,Y_Q})\geq\frac{(q-1)^{s-1}}{|\textbf{H}|}[q-1-d(\alpha,Q)].$$
\end{tm}
\begin{proof} Let $c=ev_{Y_Q}(F)$ be the codeword corresponding to the homogeneous polynomial $F\in S_{\aa}$. Then, its weight is by definition the number of non-zero components, which is the difference between the number of total components and the number of zeros of $F$ on $Y_Q$: $|Y_Q|-|V_X(F)\cap Y_Q|$. Therefore, the minimum of the weights corresponding to nonzero codewords is given by
	$$\delta({\cC}_{\aa,Y_Q})=|Y_Q|-\mbox{max}\{|V_X(F)\cap Y_Q|\: : \:F\in S_\alpha\setminus I_\alpha(Y_Q)\}.$$
	For a homogeneous polynomial $F\in S_\alpha$, we have $F\in I(Y_Q) \iff F \circ \varphi_Q \in I((\K^*)^s)$. Since $I((\K^*)^s)$ is generated by the binomials $\{y_1^{q-1}-1,\dots,y_s^{q-1}-1\}$, if $F\notin I(Y_Q)$ and $G$ is the remainder of $F(\textbf{y}^{\q_1},\dots,\textbf{y}^{\q_r})$ in $\K[y_1,\dots,y_s]$ under division
	by the set $\{y_1^{q-1}-1,\dots,y_s^{q-1}-1\}$, then $G\neq 0$. Recall from above that under this procedure every monomial $\textbf{x}^{\textbf {a}}$ in $F$ yields the monomial $\textbf{y}^{\overline{Q\textbf{a}}}=y_1^{\overline{Q_1\textbf{a}}}\cdots y_s^{\overline{Q_s\textbf{a}}}$ whose total degree is $\deg(\textbf{y}^{\overline{Q_\textbf{a}}})=\overline{Q_1\textbf{a}}+\cdots+\overline{Q_s\textbf{a}}$. Thus, the total degree of $G$ is at most the mysterious number $d(\aa,Q)$ defined earlier. Therefore $G$ has at most $d(\alpha,Q)(q-1)^{s-1}$ roots in $(\K^*)^s$ by Lemma \ref{L:numberzero}.
	
	For any point $[P]=[\t^{\q_1}:\cdots:\t^{\q_r}]\in Y_Q$, we observe the following substantial property $$[P]\in V_X(F)\:\iff\:G(t_1\dots,t_s)=0,\:\forall\:(t_1\dots,t_s)\in \varphi_Q^{-1}([P]) $$ which implies that $|V_X(F)\cap Y_Q|=\frac{|V(G)\cap(\K^*)^s|}{|\textbf{H}|}$. Then, it follows immediately that $$|V_X(F)\cap Y_Q|\leq\frac{d(\alpha,Q)(q-1)^{s-1}}{|\textbf{H}|}.$$ Thus, the number $\mbox{max}\{|V_X(F)\cap Y_Q|\:|\:F\in S_\alpha\setminus I(Y_Q)\}$ being at most $\frac{d(\alpha,Q)(q-1)^{s-1}}{|\textbf{H}|}$, we get our lower bound on $\delta({\cC}_{\aa,Y_Q})$ as we claim.
\end{proof}

\subsection{Toric codes on Hirzebruch Surfaces} \label{SubSec:codesOnHirzebruch} In this section, we compute main parameters of the toric code $\mathcal{C}_{\aa,T_X}$ obtained from Hirzebruch surfaces, where $\alpha=(c,d)\in \N\beta$. Hansen computed these parameters for the case $c<q-1$ and $d=b$, where $b$ is to be defined below, see \cite{Ha1}.
 \begin{tm}\label{T:codesOnHirzebruch} Let $T_X$ be the torus of the Hirzebruch surface $ \mathcal{H_\ell}$ over $\K$ and $\alpha=(c,d)\in \N\beta$. Then the dimension of  toric code $\mathcal{C}_{\aa,T_X}$  is given by
 $$
 \mbox{dim}_{\K}{\cC}_{\aa,T_X} =
 \begin{cases}
(b+1)(c+1-\ell b/2), & \text{if }c<q-1 \\
 (q-1)(b'+1)+(b-b')(c+1-\ell(b+b'+1)/2, & \text{if }c\geq q-1\text{ and }b\leq q-2\\
   (q-1)(b'+1)+(q-2-b')(c+1-\ell(q-2+b'+1)/2, & \text{if }c\geq q-1,b > q-2\text{ and }b'<q-2\\
   (q-1)^2, & \text{if }c\geq q-1 \text{ and } b' \geq q-2\\
 \end{cases}
 $$ and  its  minimum distance equals
 $$
 \delta({\cC}_{\aa,T_X}) =
 \begin{cases}
 (q-1)(q-1-c), & \text{if }c<q-1 \\
 (q-1)-b', & \text{if }c\geq q-1\text{ and }b\leq q-2\\
 (q-1)-b', & \text{if }c\geq q-1,b > q-2\text{ and }b'<q-2\\
 1, & \text{if }c\geq q-1 \text{ and } b' \geq q-2\\
 \end{cases}
 $$ 
 where   $b$ (respectively $b'$) is the greatest non-negative integer with the property that $c-b\ell\geq 0\;\mbox{and}\;d-b\geq 0$ (respectively $c-b'\ell\geq q-2\;\mbox{and}\;d-b'\geq 0$). 
    \end{tm} 
   \begin{proof} 
   	We first show that  $Q=\begin{bmatrix}~~0 & ~~0 & ~~1&~~0\\~~0 & ~~0 & ~~0 & ~1
	\end{bmatrix}$  parameterizes the torus, that is, $Y_Q=T_X.$ 
	$$\begin{bmatrix}h_1&h_2\end{bmatrix}\begin{bmatrix}~~0 & ~~0 & ~~1&~~0\\~~0 & ~~0 & ~~0 & ~1
	\end{bmatrix}\begin{bmatrix}~~1 & ~~0 & ~-1&~~0\\~~0 & ~~1 & ~~\ell & ~-1
	\end{bmatrix}^T=\begin{bmatrix}~~h_1\\~~h_1\ell-h_2
	\end{bmatrix}\equiv 0\;\mbox{mod}\;q-1,\:\mbox{for}\;0\leq h_1,h_2\leq q-2$$
	implies that $h_1=0=h_2$. So, $\textbf{H}=\{\hh \in \square_q \cap \Z^s \quad|\quad \hh Q\phi\equiv 0\;\mbox{mod}\;q-1\}=\{(0,0)\}$ and $|Y_Q|=(q-1)^2/|\textbf{H}|=(q-1)^2$. As $Y_Q\subset T_X$ and $|T_X|=(q-1)^2$, we have $Y_Q=T_X$, for $X=\mathcal{H_\ell}$.
	
	Let us find  a $\K-$basis for $S_\alpha$ for any $\alpha=(c,d)\in \N\beta$ where $\beta=\begin{bmatrix}~~1 & ~~0 & ~~1&~~\ell\\~~0 & ~~1 & ~~0 & ~1
	\end{bmatrix}$. Since $b$ is the greatest non-negative integer with the property that $\alpha=(c,d)=b(\ell,1)+(a,a')$ for some non-negative integers $a=c-b\ell\geq 0$ and $a'=d-b\geq 0$, the set $B_\alpha:=\{\textbf{x}^\textbf{a}\:|\:\mbox{deg}(\textbf{x}^\textbf{a})=\beta \textbf{a}=\alpha,\:0\leq a_4\leq b\}$ is a $\K-$basis for $S_\alpha$. For a fixed $a_4$, the power $a_2=d-a_4$ is fixed too and $a_1+a_3=c-\ell a_4$. So, $$B_\alpha=\{\textbf{x}^\textbf{a}\:|\:(a_1+a_3+\ell a_4,a_2+a_4)=\alpha,\:0\leq a_4\leq b,\:a_2=d-a_4,\:a_1+a_3=c-\ell a_4\}.$$ This means that for every choice of $a_4$ there are $c-\ell a_4+1$  possibilities for the tuple $(a_1,a_3)$, hence $$|B_\alpha|=\sum\limits_{a_4=0}^{b}(c+1-\ell a_4)=(c+1)(b+1)-\ell\frac{b(b+1)}{2}=(b+1)(c+1-\ell b/2).$$ 
	We know that columns of $\phi$ form a basis for $ L_\beta$ from Example \ref{ex:Hirzebruch}. Since $I(T_X)=I_{(q-1)L_\beta}$, columns of $\verb|ML|$ constitute a basis of $(q-1)L_\beta$, where
	$$\verb|ML|=(q-1)\phi=\begin{bmatrix}~q-1 & ~~~0 & -(q-1)&~~~0\\~~~0 & -(q-1) & -\ell(q-1) & ~(q-1)
	\end{bmatrix}^T.$$ 
Since $\verb|ML|$ is mixed dominating, it follows from Theorem \ref{t:mixeddominating} that $$I(T_X)=\la x_1^{q-1}-x_3^{q-1},x_4^{q-1}-x_2^{q-1}x_3^{\ell(q-1)}\ra.$$
	 Therefore $x_1^{q-1}= x_3^{q-1},x_4^{q-1}=x_2^{q-1}x_3^{\ell(q-1)}$ in the ring $S/I(T_X)$ and a basis for $S_{\aa}/I_{\aa}(T_X)$ is  $$\bar{B}_\alpha=\{\textbf{x}^\textbf{a}\:|\:\: a_1=c-a_3-\ell a_4, a_2=d-a_4, 0\leq a_3\leq \min \{c-\ell a_4,q-2\} \text{ \and } 0\leq a_4\leq \min \{b,q-2\}\}.$$
By the definition of $b'$, we have $\min \{c-\ell a_4,q-2\}=c-\ell a_4$ for $b'<a_4$ and $\min \{c-\ell a_4,q-2\}=q-2$ for $0\leq a_4 \leq b'$. The length of the code ${\cC}_{\aa,T_X}$ is $N=|T_X|=(q-1)^2.$ Next, we compute its dimension and minimum distance.
	
   \textbf{Case I:} Let $c=a+b\ell<q-1$. It is easy to see that $B_\alpha=\bar{B}_\alpha$ , so $\mbox{dim}_\K(\mathcal{C}_{\aa,T_X})=|B_\alpha|$.  Since $$d(\alpha,Q)=\mbox{max}\{\overline{Q_1\textbf{a}}+\overline{Q_2\textbf{a}}\:|\:\textbf{x}^\textbf{a}\in B_\alpha\}=\mbox{max}\{a_3+a_4\:|\:0\leq a_4\leq b,\:a_3+a_1=c-\ell a_4\}=c,$$  $\delta({\cC}_{\aa,Y_Q})\geq (q-1)^2-(q-1)c$ using Theorem \ref{t:mindislower}. On the other hand, for $\K^*=\la\eta\ra$, we have $$F=x_2^
    d\prod\limits_{i=1}^{c}(x_3-\eta^ix_1)\in S_\alpha$$ vanishing exactly at the $c(q-1)$ points $P_{i,j}=[1:1:\eta^i:\eta^j]\in T_X$, where $1\leq i \leq c$ and $1\leq j\leq q-1$. Thus,  there is a codeword $\mbox{ev}_{\alpha,T_X}(F)$ with weight $(q-1)^2-(q-1)c$. Hence,  $$\delta({\cC}_{\aa,T_X})=(q-1)^2-(q-1)c=(q-1)(q-1-c).$$ 
    
   \textbf{Case II:} Let $c\geq q-1 $ and $b\leq q-2$. Then $\min \{b,q-2\}=b$. So, if $0\leq a_4\leq b'$ then $0\leq a_3 \leq q-2$ but if $a_4 > b'$ then $0\leq a_3 \leq c-\ell a_4$, yielding the formula $$\mbox{dim}_{\K}{\cC}_{\aa,Y_Q}=|\bar{B}_\alpha|=(q-1)(b'+1)+\sum\limits_{a_4=b'+1}^{b}(c-\ell a_4+1) .$$ 
   
   Take  $F\in \bar{S}_\alpha$. Then we can write $$F(x_1,x_2,x_3,x_4)=\sum\limits_{a_4=0}^{b'}\left[\sum\limits_{a_3=0}^{q-2}k_{a_{3}a_4}x_3^{a_3}x_1^{c-\ell a_4-a_3}\right] x_4^{a_4}x_2^{d-a_4}+\sum\limits_{a_4=b'+1}^{b}\left[\sum\limits_{a_3=0}^{c-\ell a_4}k'_{a_{3}a_4}x_3^{a_3}x_1^{c-\ell a_4-a_3}\right]x_4^{a_4}x_2^{d-a_4}.$$
    For  any $G=G(y_3,y_4)=F(1,1,y_3,y_4),$ we set
       $$A=\{s_0\in\K^*\:|\:y_4-s_0\:\mbox{divides} \: G(y_3,y_4)\}$$
       and $V^*(G)=V(G)\cap(\K^*)^2.$
    The sets $V(G)\cap(\K^*\times A)$ and  $V(G)\cap(\K^*\times (\K^*\setminus A))$ form a partition of $V^*(G)$. Since $V(G)\cap(\K^*\times A)=a(q-1)$ and $V(G)\cap(\K^*\times (\K^*\setminus A))\leq d_3(q-1-a)$, we get
   \begin{equation}\label{e:min1}  |V^*(G)|\leq  |A|(q-1)+ d_3(q-1-|A|)     \end{equation}
   where $d_3=\mbox{deg}_{y_3}(G)$ and $a=|A|.$
   We claim that $|V^*(G)|\leq (q-1)(q-2)+b'$. $a$ is at most $b$, because $$\mbox{max}\{\mbox{deg}_{y_4}G\:|\:G(y_3,y_4)=F(1,1,y_3,y_4),F\in \bar{S}_\alpha\}=b.$$  Then there are three cases, depending upon the value of $a$:  $a\leq b',\:b'<a<b,\:a=b.$ \\
   
    \textbf{Case II.a:} We begin with the case $a\leq b'.$ This implies $d_3\leq q-2,$ because $d_3\leq c-b'\ell$ and $ c-b'\ell\geq q-2$. Then by (\ref{e:min1}), we conclude that
   $$|V^*(G)|\leq  a(q-1)+d_3(q-1-a)=a(q-1)+(q-2)(q-1-a)=(q-2)(q-1)+a\leq(q-2)(q-1)+b'.$$
   
   \textbf{Case II.b:} Suppose that $a=b'+k<b$ where $k\geq 1$ and $b'\neq b$ implies $d_3\leq c-(b'+k)\ell.$ From here,  the inequality (\ref{e:min1}) gives the following upper bound: 
   
  \begin{align}\nonumber
 | V^*(G)|& \leq  a(q-1)+(q-1-a)(c-\ell(b'+k))\nonumber\\
   &= a(q-1)+(q-1)(c-\ell(b'+k))-a(c-\ell(b'+k)\nonumber\\
   &= a(q-1)+(q-1)\left(c-\ell(b'+k)-(q-2)+(q-2)\right)-a(c-\ell(b'+k))\nonumber\\
   &=(q-1)(q-2)+(q-1)\left(c-\ell(b'+k)-(q-2)\right)+a\left(q-2-(c-\ell(b'+k))+1\right)\nonumber\\
 &=(q-1)(q-2)+\left(q-2-(c-\ell(b'+k))\right)\left(a-(q-1)\right)+a{} \label{e:min2}
   \end{align}
    On the other hand, we claim that $ c-(b'+1)\ell\leq q-3.$ To prove this, assume that $c-(b'+1)\ell\geq q-2$. Then $b'\neq b$ implies that $d-(b'+1)\geq 0$. So  this contradicts that $b'$ is the greatest non-negative integer with the property that $c-b'\ell\geq q-2\;\mbox{and}\;d-b'\geq 0$.\\
    It follows that $c-\ell(b'+k)=c-\ell(b'+1)-\ell(k-1)\leq q-3-\ell(k-1),$ which easily gives that $q-2-\left(c-\ell(b'+k)\right)\geq 1+ \ell(k-1)$. From $a<b\leq q-2$, we obtain that $a-(q-1)\leq-2$. If we combine the last two inequalities and (\ref{e:min2}), then we have
    \begin{equation}\label{e:min3}|V^*(G)|\leq(q-1)(q-2)-2\left(\ell(k-1)+1\right)+b'+k. \end{equation}
    Furthermore, $\ell\geq 2$ and $k-1\geq 0$, hence we get that $-2\left((k-1)\ell+1\right)\leq-4k-2$. Then (\ref{e:min3}) becomes
    $$|V^*(G)|\leq(q-1)(q-2)-3k-2+b'< (q-1)(q-2)+b',$$ 
    as required. 
   
    \textbf{Case II.c:} Consider the case $a=b\neq b'.$ Similar to (\ref{e:min2}), $d_3$ is bounded by $d_3\leq c-b\ell$ which together with  (\ref{e:min1}) gives 
       \begin{equation}\label{e:min4}| V^*(G)|\leq (q-1)(q-2)+\left(q-2-(c-\ell(b))\right)\left(b-(q-1)\right)+b.
      \end{equation} 
    Note that $c-\ell b=c-\ell\left(b'+1+b-(b'+1)\right)\leq q-3-\ell(b-b'-1)$, since $c-(b'+1)\ell\leq q-3$ proved in  Case II.b. Moreover, $b-(q-1)\leq-1$, thus we have 
     $$| V^*(G)|\leq (q-1)(q-2)+(\ell-1)(b'-b)+\ell -1+b'.$$
   From $b'<b$, we have $(\ell-1)(b'-b)\leq(\ell-1)(-1)=1-\ell$, therefore
   $$| V^*(G)|\leq (q-1)(q-2)+(\ell-1)(b'-b)+\ell -1+b'\leq(q-1)(q-2)+b'$$
   completing the proof of the claim.\\
    Thus, the minimum distance is at least  $(q-1)^2-(q-1)(q-2)-b'=(q-1)-b'$, since $$|V_\mathcal{H_\ell}(F)\cap T_X|=|V(F(1,1,y_3,y_4))\cap(\K^*)^4|=|V^*(G)|$$ for any $F\in \bar{S}_\alpha$. On the other hand,  since $|V^*(G_0)|=(q-1)(q-2)+b'$ for the polynomial $$G_0(y_3,y_4)=\prod\limits_{i=1}^{q-2}(y_3-\eta^i)\prod\limits_{j=1}^{b'}(y_4-\eta^j),$$ there is a codeword with  weight $(q-1)-b'$. This shows that
	$\delta({\cC}_{\aa,T_X})=(q-1)-b'$. Note that $G_0=F_0(1,1,y_3,y_4)$ for $$F_0(x_1,\dots,x_4)=x_1^{c-\ell b'-(q-2)}x_2^{d-b'}\prod\limits_{i=1}^{q-2}(x_3-\eta^ix_1)\prod\limits_{j=1}^{b'}(x_4-\eta^j x_1^{\ell}x_2).$$

	 \textbf{Case III:} Suppose $c\geq q-1 $, $b\geq q-2$  and $b'< q-2$. Since $0\leq a_4\leq \min \{b,q-2\}= q-2$, we get
	 
	 $$\mbox{dim}_{\K}{\cC}_{\aa,Y_Q}=|\bar{B}_\alpha|=(q-1)(b'+1)+\sum\limits_{a_4=b'+1}^{q-2}(c-\ell a_4+1) .$$

	  Pick $F\in \bar{S}_\alpha$. Then we can write $$F(x_1,x_2,x_3,x_4)=\sum\limits_{a_4=0}^{b'}\left[\sum\limits_{a_3=0}^{q-2}k_{a_{3}a_4}x_3^{a_3}x_1^{c-\ell a_4-a_3}\right] x_4^{a_4}x_2^{d-a_4}+\sum\limits_{a_4=b'+1}^{q-2}\left[\sum\limits_{a_3=0}^{c-\ell a_4}k'_{a_{3}a_4}x_3^{a_3}x_1^{c-\ell a_4-a_3}\right]x_4^{a_4}x_2^{d-a_4}.$$
	  The idea is the same as in Case II. To prove that  $\delta({\cC}_{\aa,T_X})=(q-1)-b'$, we show that the maximum value of $|V(G)|$ is $(q-1)(q-2)+b'$. In this case, $a\leq q-2$ and  we split the proof of the claim into three  cases:  $a\leq b',\:b'<a<q-2,\:a=q-2.$ The proof  is quite similar to that of the claim in Case II, so the proof is omitted here. 
	  
	  \textbf{Case IV:} Let $c\geq q-1 $ and $b'\geq q-2$. From $b'\leq b,$ $0\leq a_4\leq \min \{b,q-2\}= q-2$, We have
	 $$\bar{B}_\alpha=\{\textbf{x}^\textbf{a}\:|\:\: a_1=c-a_3-\ell a_4, a_2=d-a_4, 0\leq a_3\leq q-2 \text{ \and } 0\leq a_4\leq q-2\}$$
	 giving  $\mbox{dim}_{\K}{\cC}_{\aa,Y_Q}=|\bar{B}_\alpha|=(q-1)^2 .$ The code ${\cC}_{\aa,Y_Q}$ is trivial, that is, $\delta({\cC}_{\aa,T_X})=1.$
	\end{proof}

\begin{rema} As the referee pointed out, it is noteworthy that the same polynomials are used independently to give some codewords having the minimum weight in Theorem \ref{T:codesOnHirzebruch} and in \cite[Proposition 4.2.4]{Nardi}. 
\end{rema}

\begin{ex} \label{T:ParcodesOnHirzebruch} 
Here, we give another family of codes whose minimum distance attains our bound. These are actually Reed-Solomon codes obtained from cyclic subgroups of the torus $T_X$, for the Hirzebruch surface $X=\cl H_{\ell}$ over $\K=\F_{q}$,  where $q$ is an odd prime power. 

Take $Q=[q_1\quad  q_2\quad  q_1+2\quad  q_1\ell+q_2]$ with $q_1,q_2\in\Z$ and $\alpha=(c,d)\in \N\beta$. Then, we show below that the parameterized code ${\cC}_{\aa,Y_Q}$ is a non trivial MDS code with parameters $[\frac{q-1}{2},c+1,\frac{q-1}{2}-c]$  if $ c<\frac{q-1}{2}$ and is a trivial code otherwise.

   	First recall from Example \ref{Ex:CI} that $Y_Q=Y_Q'$ where $Q'=[0\;  0\;  2\;  0]$. It follows that  $Y_Q$ is generated by the point $[1:1:\eta^2:1]$ with $\eta$ being a generator for $\K^*$. Hence the order of $Y_Q$ equals $|\eta^2|=\frac{q-1}{(q-1,2)}=\frac{q-1}{2}$ proving that $|\textbf{H}|=2$ and that the length of ${\cC}_{\aa,Y_Q}$ is $N=\frac{q-1}{2}$. 
   	
   	Example  \ref{Ex:CI} gives also that $I(Y_Q)=I_L$ is  generated by $\ x_1^2x_2-x_4$ and  $x_1^{(q-1)/2}-x_3^{(q-1)/2} $. Thus, $x_4\equiv x_1^{\ell}x_2$ and $ x_3^{\frac{q-1}{2}}\equiv x_1^{\frac{q-1}{2}} $ in $ S/\mathrm{I}_{L} $. Hence, a basis for the vector space $S_{\aa}/I_{\aa}(Y_Q)$ is given by  
   	$$ \bar{B}_\alpha=\begin{cases} \{x_1^{c-a_3}x_2^{d} x_3^{a_3} \:|\:    0\leq a_3 \leq c\} & \mbox{if } c<\frac{q-1}{2} \\ 
       \{x_1^{c-a_3}x_2^{d} x_3^{a_3} \:|\:  0\leq a_3 < (q-1)/2\} & \mbox{if }  c \geq\frac{q-1}{2}
      \end{cases}
$$ leading to
   $$\mbox{dim}_{\K}{\cC}_{\aa,Y_Q}=|\bar{B}_\alpha|= \begin{cases} c+1 & \mbox{if } c<\frac{q-1}{2} \\ 
       \frac{q-1}{2} & \mbox{if }  c \geq\frac{q-1}{2}.
      \end{cases} $$


 To compute the minimum distance of ${\cC}_{\aa,Y_Q}$, assume that  $c<\frac{q-1}{2}$ as otherwise the dimension reaches its maximum value revealing that the code is trivial. Theorem \ref{t:mindislower}  implies that  the minimum distance of  ${\cC}_{\aa,Y_Q}$ is at least $ N-c $, because $|\textbf{H}|=2$ and $d(\alpha,Q')=\mbox{max}\{2a_3\:|\:0\leq a_3 \leq c\}=2c.$
 
On the other hand, the polynomial $F=x_2^d \prod\limits_{j=1}^{c}(x_3-\eta^{2j} x_1)$ has exactly $c$ roots, verifying that $\delta({\cC}_{\aa,Y_Q})$ is at most $N-c$. Thus, the minimum distance reaches the Singleton bound $ \delta({\cC}_{\aa,Y_Q})=N-c= N+1-K$ and hence the code is MDS.
\end{ex}

\begin{ex}\label{T:parametersweightedtorus} We give yet another instance where our bound on minimum distance is attained by codes obtained from cyclic subgroups of the torus of a weighted projective space introduced in Example \ref{ex:weighted}. Recall that the homogeneous coordinate ring of the weighted projective space $X=\Pp(1,w_1,\dots, w_n)$ over   $\K=\F_q$ is $\K[x_0,x_1,\dots,x_n]$ which is $\Z$-graded where $\deg_{\cA} (x_0)=1$ and $\deg_{\cA}(x_i)=w_i>0$ for $i=1,\dots,n$. 

We also recall that a $\Z$-basis for the key lattice $L_\beta$ for the row matrix $\beta=[1\; w_1\; \cdots\; w_n],$ is given by $\{\textbf{u}_1, \dots, \uu_n \}\subset \Z^{n+1}$ where $\textbf{u}_i=(-w_i,\textbf{e}_i)$ and $\textbf{e}_i$ is the standard basis vector of $\Z^n$, for each $i=1,\dots,n$. Let $Q=[0\;|\;a\textbf{e}_i]$ be the row matrix with a unique nonzero positive integer $a$ at the $i$-th column for $i\in \{1,\dots n\}$ together with $n$ zero columns elsewhere. Assume that $a$ divides $q-1$.

    At first glance it is not clear that the code ${\cC}_{\aa,Y_Q}$ is a Reed-Solomon code. This will be clear from the set $\bar{B}_\alpha$ obtained below using our results.
    
    Let $\alpha(i)$ be the greatest  non-negative integer to satisfy  $\alpha=\alpha(i)w_i+\alpha'(i)$ for some $0\leq\alpha'(i)<w_i$.

    It is easy to see that the point $[1:\cdots:\eta^a:\cdots:1]$ with $\eta^a$ at the $i$-th component generates $Y_Q$, and that $|Y_Q|=|\eta^a|=\frac{q-1}{a}$ and hence $|\textbf{H}|=a$.
    
    Let us find generators for the vanishing ideal of $Y_Q\subset \K[x_0,x_1,\dots,x_n]$. By Lemma \ref{lm:equality}, $I(Y_Q)=I_L$ for the lattice $L=\{\m\in L_\beta: Q\m\equiv 0\;\mbox{mod}\;(q-1)\}$. Using Example \ref{ex:Hirzebruch},we find a basis for $L$. Take $\m\in L_\beta$, then $\m=\phi \textbf{c} $ for some $\textbf{c}\in \Z^n$. $\m\in L$ if and only if $Q\phi\textbf{c}=a\textbf{e}_i\textbf{c}=ac_i\equiv 0\;\mbox{mod}\;q-1$ which is equivalent to $c_i=\frac{q-1}{a}k$ for some $k\in \Z$. Thus, $\m\in L$ if and only if $$\m=c_1 \uu_1+\cdots +c_i \uu_i+\cdots+c_n \uu_n=c_1 \uu_1+\cdots +k(\frac{q-1}{a} \uu_i)+\cdots+c_n \uu_n.$$
    Hence, a $\Z$-basis for $L$ is given by the set $\{\textbf{u}_1, \dots,\uu_{i-1},\frac{q-1}{a} \uu_i,\uu_{i+1},\dots, \uu_n \}$. Since the matrix $\verb|ML|=[\textbf{u}_1 \cdots \uu_{i-1} \quad \frac{q-1}{a} \uu_i \quad \uu_{i+1} \cdots \uu_n]$ is mixed dominating, $I(Y_Q)$ is a complete intersection generated by $$\{ F_1,\dots,F_i,\dots,F_n\} \text{ where } F_i=x_0^{(q-1)w_i/a}-x_i^{(q-1)/a} \text{ and } F_j=x_0^{w_j}-x_j \text{ for } j\in\{1,\dots,n\}\setminus\{i\}.$$ Therefore, $x_0^{(q-1)w_i/a}=x_i^{(q-1)/a}$ and $x_0^{w_j}=x_j$ for $j\in\{1,\dots,n\}\setminus\{i\}$ in the quotient ring $S/I(Y_Q)$. For a positive integer  $\alpha\in\N\beta=\N$, a basis $\bar{B}_\alpha$ for the vector space $S_{\aa}/I_{\aa}(Y_Q)$ is given by 
    $$ \bar{B}_\alpha= \begin{cases}  \{x_0^{\aa-a_iw_i}x_i^{a_i}\:|\:0\leq a_i\leq\alpha(i)\} & \mbox{if } \alpha(i)<\frac{q-1}{a} \\ 
	\{x_0^{\aa-a_iw_i}x_i^{a_i}\:|\:0\leq a_i < \frac{q-1}{a}\} & \mbox{if }  \alpha(i) \geq\frac{q-1}{a}.
	\end{cases}$$
So, we get 
	$$  \mbox{dim}_{\K}{\cC}_{\aa,Y_Q}=H_{Y_Q}(\alpha)=|\bar{B}_\alpha|= \begin{cases} \alpha(i)+1 & \mbox{if } \alpha(i)<\frac{q-1}{a} \\ 
	\frac{q-1}{a} & \mbox{if }  \alpha(i) \geq\frac{q-1}{a}.
	\end{cases} $$\\
	It is now time to show that ${\cC}_{\aa,Y_Q}$ is an MDS code of length $N=\frac{q-1}{a}$. If  $ \alpha(i) \geq\frac{q-1}{a}$ then the dimension reaches its upper bound, i.e.  $\mbox{dim}_{\K}{\cC}_{\aa,Y_Q}=\frac{q-1}{a}=N$, so that the code ${\cC}_{\aa,Y_Q}$ is a trivial MDS code. 
	
	Assume now that $ \alpha(i) < \frac{q-1}{a}$. Then   by Theorem \ref{t:mindislower}, we have  $\delta({\cC}_{\aa,Y_Q})\geq \frac{q-1}{a}-\alpha(i)$, because $$d(\alpha,Q)=\mbox{max}\{a_ia\:|\:0\leq a_i\leq \alpha(i)\}=a\alpha(i).$$ 
	Consider the polynomials $F\in S_\alpha$ and $G(y)=F(1,\dots,y^a,\dots,1)\in \K[y]$, where $$F=x_0^{\aa-a_iw_i}x_i^{a_i-\aa(i)}\prod\limits_{j=1}^{\alpha(i)}(x_i-(\eta^a)^jx_0^{w_i}) \text{ and } G(y)=y^{a(a_i-\alpha(i))}\prod\limits_{j=1}^{\alpha(i)}(y^a-(\eta^a)^j).$$ The polynomial $G$ has exactly $a\alpha(i)$ roots, as $y^a=(\eta^a)^j$ has exactly $a$ solutions $(\eta^{\frac{q-1}{a}})^k\eta^j$ corresponding to integers $k\in[0, a-1]$ for every $j\in\{1,\dots,\alpha(i)\}$. Thus, $$|V_X(F)\cap Y_Q|=\frac{|V(G)\cap \K^*|}{|\textbf{H}|}=\alpha(i)$$ and $F$ produces the codeword with weight $\frac{q-1}{a}-\alpha(i)$, that is,  $\delta({\cC}_{\aa,Y_Q})= \frac{q-1}{a}-\alpha(i)=N-K+1$.
	
	Finally, we prove that  for any $\alpha_1,\alpha_2\in \N\beta $, the codes ${\cC}_{\alpha_1,Y_Q}$ and ${\cC}_{\alpha_2,Y_Q}$ are equivalent as soon as $\alpha_1(i)=\alpha_2(i)$. This will follow from \cite[Proposition 4.3]{sasop} since in this case both codes have the same dimensions: $H_{Y_Q}(\alpha_1)=H_{Y_Q}(\alpha_2)$ and that we have either $\alpha_1-\alpha_2 \in \N\beta=\N$ or $\alpha_2-\alpha_1\in\N\beta=\N$.
\end{ex}

\section{Examples} \label{S:Ex}

In this section, we give examples to reveal that some toric varieties other than $\Pp^n$ can have more and better codes, and to demonstrate that certain subgroups $Y_Q$ of $T_X$ can produce better codes than $T_X$ produces. 

Let us start by explaining what we mean from "better" in this context. A classical approach to compare two codes having the same length and dimension is to compare the remaining parameter: the minimum distance. The code with a bigger minimum distance is regarded better as it will have a bigger error-correction capacity. A code is called BP (best possible) if its minimum distance attains the maximum possible value among all codes with the same length and dimension, which can be checked online using the database \cite{grassl} recording lower and upper bounds for the minimum distance. For a given length and dimension, this database lists a BK (best known) code over a finite field with at most $9$ elements, whose minimum distance determines the lower bound. Although the upper bound is theoretical, it does not come from the same source for all the codes. A unique but mostly weaker bound also known as the Singleton's bound is given by $\delta\leq N+1-K$ for a given code with parameters $[N,K,\delta]$. A code is called MDS (maximum distance separable), if $\delta$ attains its maximum value, i.e. $\delta=N+1-K$. A primary goal of the coding theory is to improve the lower bound by exhibiting new codes with a higher minimum distance beating the BK code as well as to demonstrate the existence codes whose minimum distance reaches the upper bound in \cite{grassl}. Toric codes have been used to produce such champion codes. The techniques of this paper can be used for a systematic search for obtaining new champion codes. 

As we evaluate homogeneous polynomials of degree $\aa$ on a subgroup $Y_Q$ of the torus $T_X$, the code $\mathcal{C}_{\aa,Y_Q}$ is a \textit{puncturing} of the toric code $\mathcal{C}_{\aa,T_X}$. In order to compare two such codes whose lengths or dimensions are different, we use the following approach. The Singleton's bound implies the inequality $N+1-\delta-K\geq 0$ for a code $\mathcal{C}$ with parameters $[N,K,\delta]$. This inequality is clearly equivalent to $S(\mathcal{C}) \geq 0$, where $$S(\mathcal{C})=(1-K/N)-(\delta-1)/N.$$
Thus, a code will be regarded \textit{better} if it has a smaller $S(\mathcal{C})$ value. Notice that the code is MDS if and only if $S(\mathcal{C})=0$. When two codes have the same $S(\mathcal{C})$ value, there is another invariant $E(\mathcal{C})$, which measures the extend to which the code approaches its error correction capacity, where $$E(\mathcal{C})=(1-K/N)-\lfloor (\delta-1)/2 \rfloor /N  .$$ Then, a \textit{better} code will have a smaller $E(\mathcal{C})$.

Using the techniques  developed in the previous sections we first identify the finite list of non-trivial and non-equivalent codes in the following $2$ examples. Then, we compute parameters of these codes and make Table \ref{codes} and Table \ref{codes2} for comparing them. There are only $2$ codes from Example \ref{Ex:P3}, where $Y_Q \subset \Pp^3$. We use the same $Q$ in Example \ref{Ex:YQH2}, where $Y_Q \subset \cl H_2$ and get $6$ codes. There are three BP and one MDS codes among them. This indicates that \textit{considering different toric varieties $X$ as ambient spaces is a good alternative for the projective space} $\Pp^n$, see Table \ref{codes}. 
\begin{longtable}[h!]{|c|c|c|c|c|}
	\caption{Code Comparison.\label{codes}}\\
	\hline
	{$\mathbf{\alpha}$ \cellcolor{gray!20}}& {$[N,K,\delta]$ \cellcolor{gray!20}}& {$S(\mathcal{C}_{\aa,Y_Q})$ \cellcolor{gray!20}}& {$E(\mathcal{C}_{\aa,Y_Q})$ \cellcolor{gray!20}}&{status \cellcolor{gray!20}} \\
	\hline
	\multicolumn{5}{| c |}{Codes on $Y_Q \subseteq \Pp^3$ \label{tab:1}}\\
	
	\hline
	1  &[16, 4, 9]& 1/4 & 1/2 & \\ 
	\hline 
	2 &[16, 9, 4] & 1/4& 3/8& \\ 
	\hline
	\multicolumn{5}{|c|}{Codes on $Y_Q \subseteq \cl H_2$}\\
	\hline
	(1, 0) &[8, 2, 6] & 1/8&1/2 & BP  \\ 
	\hline
	(2, 0) &[8, 3, 4] & 1/4&1/2 &\\ 
	\hline
	(3, 0) &[8, 4, 2] & 3/8&1/2 &   \\ 
	\hline
	(2, 1) &[8, 4, 4] & 1/8&3/8 & BP\\ 
	\hline
	(3, 1) &[8, 6, 2] & 1/8&1/4 & BP    \\ 
	\hline
	(4, 1) &[8, 7, 2] & 0&1/8 &MDS\\ 
	\hline
	
\end{longtable}	

 The set $Y_Q$ of Example \ref{Ex:YQH2} is a proper subgroup of $T_X$ and so $\mathcal{C}_{\aa,Y_Q}$ is a puncturing of the code $\mathcal{C}_{\aa,T_X}$, which is obtained by deleting the $8$ components of a code word in $\mathcal{C}_{\aa,T_X}$ corresponding to the points in $T_X\setminus Y_Q$. By looking at the Table \ref{codes2}, we see that $S(\mathcal{C}_{\aa,Y_Q}) < S(\mathcal{C}_{\aa,T_X})$ and that $E(\mathcal{C}_{\aa,Y_Q}) < E(\mathcal{C}_{\aa,T_X})$ for all degrees $\aa\in \{(1,0),(2,0),(3,0),(2,1),(3,1),(4,1)\}$. We know from Example \ref{Ex:YQH2} that the codes $\mathcal{C}_{\aa,Y_Q}$ are trivial MDS codes with parameters $[8,8,1]$ for $\aa\in \{(5,1),(4,2),(5,2),(6,2),(7,2),(6,3),(7,3),(8,3)\}$. So, $S(\mathcal{C}_{\aa,Y_Q})=E(\mathcal{C}_{\aa,Y_Q})=0$ and thus the code $\mathcal{C}_{\aa,Y_Q}$ is always better than the code $\mathcal{C}_{\aa,T_X}$. This reveals that \textit{puncturing the code} $\mathcal{C}_{\aa,T_X}$ \textit{by considering the proper subgroup $Y_Q$ of $T_X$ produces better codes}.

\begin{longtable}[h!]{|c|c|c|c|c||c|c|c|c|c|}
	\caption{Code Comparison on $X=\cl H_2$.\label{codes2}}\\
	\hline
	{$\mathbf{\alpha}$ \cellcolor{gray!20}}& {$[N,K,\delta]$ \cellcolor{gray!20}}& {$S(\mathcal{C}_{\aa,T_X})$ \cellcolor{gray!20}}& {$E(\mathcal{C}_{\aa,T_X})$ \cellcolor{gray!20}}&{status \cellcolor{gray!20}} & {$[N,K,\delta]$ \cellcolor{gray!20}}& {$S(\mathcal{C}_{\aa,Y_Q})$ \cellcolor{gray!20}}& {$E(\mathcal{C}_{\aa,Y_Q})$ \cellcolor{gray!20}}&{status \cellcolor{gray!20}}\\
	
	\hline
	(1, 0) &[16, 2, 12] & 3/16 &9/16 & & [8, 2, 6] & 1/8&1/2 & BP \\ 
	\hline
	(2, 0) &[16, 3, 8] & 3/8&5/8 & &[8, 3, 4] & 1/4&1/2 &\\ 
	\hline
	(3, 0) &[16, 4, 4]  & 9/16&11/16& & [8, 4, 2] & 3/8&1/2 &  \\ 
	\hline
	(2, 1) &[16, 4, 8] & 5/16&9/16 & &[8, 4, 4] & 1/8&3/8 & BP\\ 
	\hline
	(3, 1) &[16, 6, 4]  & 7/16 &9/16 & &[8, 6, 2] & 1/8&1/4 & BP   \\ 
	\hline
	(4, 1) &[16, 7, 4] & 3/8& 1/2 & &[8, 7, 2] & 0&1/8 &MDS\\
	\hline
	(5, 1) &[16, 8, 3]  & 3/8&7/16 & &[8, 8, 1] & 0&0 &MDS  \\ 
	\hline
	(4, 2) &[16, 8, 4] & 5/16& 7/16 & &[8, 8, 1] & 0&0 &MDS\\
	\hline
	(5, 2) &[16, 10, 3] & 1/4&5/16 & &[8, 8, 1] & 0&0 &MDS  \\ 
	\hline
	(6, 2) &[16, 11, 3] & 3/16&1/4 & &[8, 8, 1] & 0&0 &MDS\\
	\hline
	(7, 2) &[16, 12, 2] & 3/16&1/4 &  &[8, 8, 1] & 0&0 &MDS  \\ 
	\hline
	(6, 3) &[16, 12, 3] & 1/8&3/16 & &[8, 8, 1] & 0&0 &MDS\\
	\hline
	(7, 3) &[16, 14, 2] & 1/16& 1/8 & BP &[8, 8, 1] & 0&0 &MDS  \\ 
	\hline
	(8, 3) &[16, 15, 2] & 0&1/16 &MDS &[8, 8, 1] & 0&0 &MDS\\ 
	\hline
	
\end{longtable}	

We conclude the section by explaining how to list the codes filling in the Table \ref{codes} and Table \ref{codes2}. The values listed in the first part of Table \ref{codes2} were calculated by Theorem \ref{T:codesOnHirzebruch}. We start with the case of the projective space.

\begin{ex}[Codes on $Y_Q \subseteq \Pp^3$]\label{Ex:P3} Fix $q=5$ and consider the incidence matrix $Q$ of the square shaped graph with vertices $V=\{1,2,3,4\}$ and edges $E=\{\{1,2\},\{2,3\},\{3,4\},\{1,4\}\}$. Then, the parameterized subgroup of the torus in the projective space $\Pp^3$ is $$Y_Q=\{[t_1t_2:t_2t_3:t_3t_4:t_1t_4]~:~t_1,t_2,t_3,t_4\in \F_5\}\subseteq \Pp^3.$$
	Using the Procedure \ref{pdr:length}, we find that $N=|Y_Q|=16$ first. In the second step, using either of the algorithm \ref{a:lattice1}, we find the following $10$ minimal generators of $I(Y_Q)$:
	$$x_1x_3-x_2x_4, ~~x_1^4-x_4^4, ~~x_1^3x_2-x_3x_4^3, ~~x_1^2x_2^2-x_3^2x_4^2, ~~x_1x_2^3-x_3^3x_4,$$
	$$ 
	x_2^4-x_4^4, ~~~x_2^3x_3-x_1^3x_4, ~~~x_2^2x_3^2-x_1^2x_4^2, ~~~x_2x_3^3-x_1x_4^3, ~~~x_3^4-x_4^4.$$
	As in Example \ref{P(2,2,3,5)}, we see that the first $3$ values of the Hilbert function are ${1, 4, 9}$ and that the rest is $H_{Y_Q}(\aa)=16$, for $\aa>2$. This means that $Q$ produces only $2$ non-trivial codes forming the first part of Table \ref{codes}. 
\end{ex}
\begin{ex}[Codes on $Y_Q \subseteq \cl H_2$] \label{Ex:YQH2} We continue to use the same $q$ and $Q$ as in Example \ref{Ex:P3} and get more and better codes on the Hirzebruch surface $\cl H_2$. In this case, the size becomes $N=|Y_Q|=8$. By using the algorithms we develop in previous sections, we compute the following minimal generating set for $I(Y_Q)$:
	$$I(Y_Q)=\langle x_1^4-x_3^4, \quad x_1^2x_2^2x_3^2-x_4^2\rangle.$$ So, $Y_Q$ becomes a complete intersection on $\cl H_2$. Since $x_1^4-x_3^4, x_1^2x_2^2x_3^2-x_4^2\in I(Y_Q)$, it follows that $x_3^{4}+I(Y_Q)=x_1^{4}+I(Y_Q)$ and $x_4^{2}+I(Y_Q)=x_1^{2}x_2^{2}x_3^2+I(Y_Q)$ in the quotient ring $S/I(Y_Q)$. So, we have the following bases $\bar{B}_{\aa}$ for the vector space $S_{\aa}/I(Y_Q)_{\aa}$. 
	$$\bar{B}_{(1,0)}=\{x_1,x_3\};\quad \bar{B}_{(2,0)}=\{x_1^2,x_1x_3,x_3^2\};\quad \bar{B}_{(c,0)}=\{x_1^c,x_1^{c-1}x_3,x_1^{c-2}x_3^2,x_1^{c-3}x_3^3\}, \quad \mbox{for} ~~c>2,$$
	$$ \bar{B}_{(0,d)}=\{x_2^d\}\quad \mbox{and}  \quad \bar{B}_{(1,d)}=\{x_1x_2^d,x_3x_2^d\}, ~~\mbox{for} ~~d\in \N; \quad \bar{B}_{(2,d)}=\{x_1^2x_2^d,x_1x_3x_2^d,x_3^2x_2^d,x_2^{d-1}x_4\}~~\mbox{for} ~~d>0,$$
	$$\bar{B}_{(3,d)}=\{x_1^3x_2^d,x_1^2x_3x_2^d,x_1x_3^2x_2^d,x_3^3x_2^d,x_1x_2^{d-1}x_4,x_3x_2^{d-1}x_4\} ~~\mbox{for} ~~d>0,$$
	$$\bar{B}_{(4,d)}=\{x_1^4x_2^d,x_1^3x_3x_2^d,x_1^2x_3^2x_2^d,x_1x_3^3x_2^d,x_1^2x_2^{d-1}x_4,x_1x_3x_2^{d-1}x_4,x_3^2x_2^{d-1}x_4\} ~~\mbox{for} ~~d>0,$$
	$$\bar{B}_{(5,d)}=\{x_1^5x_2^d,x_1^4x_3x_2^d,x_1^3x_3^2x_2^d,x_1^2x_3^3x_2^d,x_1^3x_2^{d-1}x_4,x_1^2x_3x_2^{d-1}x_4,x_1x_3^2x_2^{d-1}x_4,x_3^3x_2^{d-1}x_4\} ~~\mbox{for} ~~b>0.$$
	Thus, the values of $H_{Y_Q}(c,0)$ starting from $c=0$ are $1, 2, 3, 4, 4, 4, 4, \dots$, and the values of $H_{Y_Q}(c,1)$ are $1, 2, 4, 6, 7, 8$ for $c=0,1,2,3,4,5$. By \cite[Corollary 3.18]{sasop}, if $\aa'-\aa\in \N\bb$ then $H_{Y_Q}(\aa)\leq H_{Y_Q}(\aa')$. Thus, we have $8=H_{Y_Q}(5,1)\leq H_{Y_Q}(a,1)\leq 8$, for all $c>5$, as $(c-5,0)\in \N\bb$. Similarly, we have $H_Y(c,d)=8$ for all $c>5$ and $d>0$, as $(c-5,d-1)\in \N\bb$. Hence, the values of $H_{Y_Q}(c,1)$ starting from $ac=0$ are $1, 2, 4, 6, 7, 8, \dots$ and this sequence of $H_{Y_Q}(c,d)$ is always the same, for any $d>1$ as $(0,d-1)\in \N\bb$. By \cite[Proposition 4.3]{sasop}, if $\aa'-\aa\in \N\bb$ and $H_{Y_Q}(\aa)=H_{Y_Q}(\aa')$ then the codes $\mathcal{C}_{\aa,Y_Q}$ and $\mathcal{C}_{\aa',Y_Q}$ are equivalent, i.e. have the same parameters. Hence, the only non-equivalent and non-trivial codes are the generalized toric codes $\cC_{\aa,Y_Q}$ for the degrees $\aa\in \{(1,0),(2,0),(3,0),(2,1),(3,1),(4,1)\}$. Notice that although $H_{Y_Q}(3,0)= H_{Y_Q}(2,1)=4$, the corresponding codes are not equivalent, which is not surprising as $\pm[(3,0)-(2,1)]\notin \N\bb$. For the parameters of the corresponding codes, see the second part of Table \ref{codes}.
\end{ex}

\section*{Acknowledgements} We thank O\u{g}uz Yayla for his valuable helps on Section \ref{S:Ex}. We also thank an anonymous referee for helpful comments and suggestions improving the presentation of the paper.

\bibliographystyle{amsplain}

\end{document}